\newtheorem{thm}{Theorem}[section]
\newtheorem{cor}[thm]{Corollary}
\theoremstyle{definition}
\newtheorem{dfn}{Definition}[section]
\newtheorem{ass}{Assumption}
\theoremstyle{remark}
\newtheorem{rmk}{Remark}[section]
\def\1{\mbox{1\hspace{-.35em}1}} % indicatrice
\def\argmin{\mbox{argmin}}
\def\R{\mathbb{R}}
\def\N{\mathbb{N}}
\def\E{\mathbb{E}}
\def\e{\,\mbox{e}\,}
\title{High dimensional VAR with low rank transition}
\author{Pierre Alquier$^{(1)}$\footnote{This author gratefully acknowledges financial support from the research programme {\it New Challenges for New Data} from LCL and GENES, hosted by the {\it Fondation du Risque} and from Labex ECODEC (ANR-11-LABEX-0047).}, Karine Bertin$^{(2)}$, Paul Doukhan$^{(3,2)}$\footnote{The work of the second and the third author has been developed within the MME-DII center of excellence (ANR-11-LABEX-0023-01) and with the help of PAI-CONICYT MEC N$^\circ$ 80170072. The authors have been supported by Fondecyt project 1171335 and Mathamsud 18-MATH-07.}, R\'emy Garnier$^{(4,3)}$
\\
\\
\small{{\it (1) RIKEN, Center for Advanced Intelligence Project}, 1-4-1 Nihonbashi Chuo-ku, Tokyo, JAPAN}
\\
\small{{\it (2) CIMFAV, Universidad de Valpara\'iso}. General Cruz 222 -- Valpara\'iso, CHILE}
\\
\small{{\it (3) Laboratoire AGM UMR 8088, Universit\'e Paris Seine}, 2 Bd. Adolphe Chauvin, 95000 Cergy-Pontoise, FRANCE}
\\
\small{{\it (4) CDiscount}, 120 Quai de Bacalan, 33000 Bordeaux, FRANCE}
}
\date{}
\begin{document}
\maketitle

\begin{abstract}
 We propose a vector auto-regressive (VAR) model with a low-rank constraint on the transition matrix. This model is well suited to predict high-dimensional series that are highly correlated, or that are driven by a small number of hidden factors. While our model has formal similarities with factor models, its structure is more a way to reduce the dimension in order to improve the predictions, rather than a way to define interpretable factors. We provide an estimator for the transition matrix in a very general setting, and study its performances in terms of prediction and adaptation to the unknown rank. Our method obtains good result on simulated data, in particular when the rank of the underlying process is small. On macro-economic data from~\cite{giannone2015prior}, our method is competitive with state-of-the-art methods in small dimension, and even improves on them in high dimension.
\end{abstract}

\section{Introduction}

Machine learning became omnipresent in time series forecasting. With the growing ability to collect and store data, one often has to predict series with dimensions from a few hundreds (for economic time series see~\cite{jochmann2013stochastic}) to millions (as it might be the case for sales forecasts for a huge set of items, see for example~\cite{kumar2010using}). The applications fields range from health data analysis (\cite{lipton2016modeling}), electricity consumption forecasting (\cite{gaillard2016additive,Mei17}), economics and GDP forecasting (\cite{cornec2014constructing}), traffic forecasting (\cite{lippi2013short}) and public transport attendance (Chapter 5 in~\cite{carel2019big}), social media analysis (\cite{saha2012learning}), ecology (\cite{purser2009use}), sensor data analysis (\cite{basu2007automatic})... In many of these applications, accurate predictions are of the uttermost importance. For online retail, a bad prediction of the sales before Christmas might lead to inadequate storage and huge losses of money. Poor electricity forecasts might lead to power outage. When it comes to health data, a good prognosis may even be vital\ldots

In many cases, algorithms that were primarily designed for i.i.d. observations are used on time series with good results. For example, the additive model of~\cite{buja1989linear} is used in~\cite{carel2019big} to predict bus lines attendance, with promising results. While generalization bounds were developed first for i.i.d data (see e.g ~\cite{Va98} for an overview), some authors actually proved similar results when the same algorithms are applied on time series. We refer the reader to the pioneering work of~\cite{Mei00} and Steinwart with various co-authors (see~\cite{SC09,SHS09,HS14} for an early overview). Model selection tools are investigated in~\cite{AW12,MSS17} while aggregation methods are studied via PAC-Bayesian bounds in~\cite{alquier2012prediction,ALW13,SK13,LHTG14,AB18}. While these results hold for stationary series, some authors also tackled non-stationary series. De-trending techniques are considered in \cite{CW18} while~\cite{KM15,ADF18} directly prove generalization bounds on non-stationary series. An original approach was also developed in~\cite{giraud2015aggregation} for time-varying AR, relying on sequential prediction methods~ (\cite{cesa2006prediction}).

However, we believe that there is a need to develop machine learning methods that aim at capture stylized facts in time series, in the same spirit that stochastic volatility models were developed to capture stylized facts in finance (see \cite{engle1995arch,francq2019garch}). In this paper, we study a  vector auto-regressive (VAR)  that is suitable to predict high-dimensional series that are strongly correlated, or that are driven by a reasonable number of hidden factors, in the spirit of factor models studied in econometrics~\cite{koop2004forecasting,lam2011estimation,giordani2011bayesian,lam2012factor,hallin2013factor,chan2018invariant}. It is to be noted that factor models naturally lead to low-rank transition matrices. However, in our case, the focus is not on the interpretation of these factors, but on the prediction of the time series. Indeed, even though the series is not driven by a small number of hidden factors, the low-rank constraint on the transition matrix induces a dimension reduction that can improve the predictions, as in~\cite{negahban2011estimation}; see also~\cite{basu2019low} who used such a constraint to detect outliers. We believe that this constraint makes often more sense than sparsity constraints used for example in~\cite{D16}. On the contrary to~\cite{negahban2011estimation}, we focus on the prediction ability of our estimator. We study its prediction performances under a general loss function. While the quadratic loss or the absolute loss are indeed useful, quantile losses can also be used to derive confidence intervals. %The coefficients can be efficiently computed by convex optimization techniques.

Let us briefly describe the motivation for our models. Assume we deal with an $\mathbb{R}^M$-valued process $(X_t)_{t\geq 0}$ with
\begin{equation}\label{AR1}
    X_t = A X_{t-1} + \xi_t
\end{equation}
for a very large $M$. First, in the hidden factor model, it is assumed that factors $H_t$, that are linear functions of $X_t$, drive the evolution of the process (as a simple example, $X_t$ can be the GDP of each country, and the hidden factors can reflect the general state of the economics at a continent scale). We can write them as $ H_t = V X_t$ where $V$ is $r\times M$ with $r \ll M$. Then, assuming that $X_{t+1}$ can be linearly predicted by $ H_t $, we can predict $X_{t+1}$ by $U H_t = U V X_t $ for some $M\times r$ matrix $U$. At the end of the day, we indeed predict $X_{t+1}$ by $(UV) X_t = A X_t$, but the rank of $A$ is $r\ll M$. However, note that even in situations where we cannot clearly identify hidden factors, a low-rank assumption makes sense. For example, if all the series in $X_t$ are strongly correlated, then $X_t$ can be reasonably well approximated by a rank-one matrix. Hence, to impose a low-rank constraint on the transition matrix (here we assume that the matrix $A$ has a rank $r_0\le M$) will highly reduce the variance, at the price of a small increase of the bias term. This might lead to improved predictions.

Note that the assumption that the coefficient matrix $A$ is low-rank in a multivariate regression model $Y_i = AX_i+\xi_i $ where $Y_i\in\mathbb{R}^s$ and $X_i\in\mathbb{R}^t$ was studied in Econometric theory as early as in the 50's (\cite{An51,Iz75}). It was referred to as RRR (reduced-rank regression). We refer the reader to~\cite{KLT11,BSW11,suzuki2015convergence,ACL17,klopp2017robust,BW17,klopp2017structured,moridomi2018tighter} for state-of-the-art results. Low-rank matrices were actually used to model high-dimensional time series by~\cite{Mei17} and~\cite{alquier2019matrix}, however, the models described in these papers cannot be straightforwardly used for prediction purposes. Here, we study estimation and prediction for the model~\eqref{AR1}.

More precisely, we propose two estimators of the matrix $A$: a fixed-rank estimator and a rank-penalized estimator. The first one is obtained minimizing an empirical prediction error based on a Lipschitz loss function among a specific family of matrix with given rank $r$. In the second one, a procedure based on rank penalization of our empirical error allows us to select a rank $\hat{r}$ and we finally consider the fixed-rank estimator with rank $\hat{r}$. As we mentioned before, we study properties of both estimators in terms of prediction. We prove that the rank-penalized estimator satisfies an oracle inequality adapting to unknown rank. 
We illustrate performances of our estimators on simulated data and a macro-economic dataset from~\cite{giannone2015prior}. We  also compare  our estimators to near low-rank or nuclear estimators proposed in \cite{negahban2011estimation} and \cite{ji2009accelerated}. 

The paper is designed as follows. In the end of the introduction, we provide notations that will be used in the whole paper. We describe our low rank contractive VAR(1) model in Section \ref{s2}; contraction yields the existence of a stationary solution and together yields standard weak dependence properties. The two estimators and their properties are given in Section \ref{s3} whereas the simulation study and application to real data set are  presented in Sections \ref{s4} and \ref{s5}. Finally, the proofs of the results of Section \ref{s3} are postponed to Section \ref{s7}.

\subsection*{Notations}

We introduce here notations that are used in the whole paper. For a vector $x\in\mathbb{R}^M$, $\|x\|$ will denote the Euclidean norm of $x$. We will denote by $|\lambda_1(Q) |\geq \dots \geq |\lambda_M(Q)| \geq 0$ the singular values of an $M\times M$ matrix $Q$, where $r={\rm rank}(Q)$. Note that the singular value decomposition (SVD) of $Q$ can be written
\begin{equation}\label{rankdec} \underbrace{Q}_{M\times M} = \underbrace{U}_{M\times r} \left(\begin{array}{c c c} \lambda_1(Q) & \dots & 0 \\ \vdots & \ddots & \vdots \\ 0 & \dots & \lambda_r(Q) \end{array}\right) \underbrace{V^T}_{r\times M}
\end{equation}
for some semi-unitary matrices $U$ and $V$: $U^T U = V^T V = I_r$, where $I_r$ is the $r\times r$ identity matrix. This means that, from the implicit function theorem, the set of such matrices is a subspace of a Riemanniann manifold with dimension $2(rM-r)+r=r(2M-1)$.\\ For $p\geq 1$ we let $\|Q\|_{S_p}$ denote the Schatten-$p$-norm of $Q$ defined by
$$\|Q\|_{S_p} = \left(\sum_{j=1}^{M}| \lambda_j(Q)|^p\right)^{\frac{1}{p}} $$
if $p<\infty$, and $\|Q\|_{S_\infty}=|\lambda_1(Q)|$.  For $\rho>0$ and $r\in\{1,\ldots,M\}$, we define:
$$ \mathcal{M}(\rho,r) = \left\{Q \in{\cal M}_{M\times M}(\mathbb{R})/\ {\rm rank}(Q) \leq r, \|A\|_{S_\infty} \leq \rho \right\} .$$

\section{Vector auto-regressive process with low-rank constraint}
\label{s2}

We consider the $\mathbb{R}^M$-valued VAR process $(X_t)_{t\geq 1}$, defined from~\eqref{AR1},  where  $(\xi_t)$ are i.i.d and centered random variables and where the matrix $A$ satisfies assumption:

\begin{ass} \label{ass:rho} We assume that $A\in\mathcal{M}(\rho,r_0)$ for some $\rho<1$ and $r_0\in\{1,\dots,M\}$.
\end{ass}
For now, the rank assumption is not restrictive, as the largest possible value $r_0=M$ is not forbidden. However, we will see that a smaller $r_0$ will lead to better rates of convergence in terms of prediction.

The assumption $\rho<1$ is more restrictive, but it is important in what follows. Indeed, setting $F(x,\xi)=A x + \xi$, the following contraction property holds:
$$
\mathbb{E}\|F(x',\xi)-F(x,\xi)\|\le \rho \|x'-x\|,
$$
which yields in case of the existence of a moment for  $\xi$,
$\mathbb{E}\|F(0,\xi)\|=\mathbb{E}\|\xi\|<\infty
$. This implies  the  $\tau$-dependence property (with a geometric decay, $\tau_k\le C\rho^k$) and the ergodicity of this model (see \cite{DDp07}). The assumption $\|A\|_{S_\infty}\le \rho<1$   also implies that the matrix $I-A$ is invertible and its inverse writes as
$$(I-A)^{-1}=I+\sum_{k=1}^\infty A^k.$$
The stationary distribution $\pi$ of this model is the distribution of $(I-A)^{-1}\xi$. From now, we will assume that $X_1\sim\pi$, that is, the process $(X_t)$ is stationary. In the special case of Gaussian noise $\xi_t\sim {\cal N}(0,\Sigma)$ then $$\pi = {\cal N}\left(0,(I-A)^{-1}\Sigma \left((I-A)^{-1}\right)^T\right).$$

We also introduce the following assumption.
\begin{ass}\label{ass:bruit2}
There exist some positive constants $c$ and $d$ such that the noise process $\xi=(\xi^{(1)},\ldots,\xi^{(M)})$ %are identically distributed with the same distribution as $\zeta\in\R$
satisfies for all $j=1,\ldots, M$
\begin{equation*}
\label{exp}\E \exp\left(\frac{|\xi^{(j)}|}c\right)\le d.
\end{equation*}
% \begin{equation*}
%\mathbb{E} \left[\exp{\frac{\|\xi\|}{C}}\right]=D<\infty,
% \end{equation*}
\end{ass}
This assumption is for example satisfied for a Gaussian noise $\xi_t \sim \mathcal{N}(0,I_M)$, but it also holds in any case where the components $\xi^{(j)}_t$ of $\xi_t$ are independent and follow any sub-Gaussian distribution, such as a bounded distribution. Assumption~\ref{ass:bruit2} is not required for the process $(X_t)$ to be well defined. However, we will use it to derive our theoretical results on estimation and prediction.

\section{Estimators, prediction and rank selection}\label{s3}

Following the approach described in~\cite{Va98}, we measure the quality of an estimator by its generalization ability, that is, by its out of sample prediction performances. The quality of a prediction will be assessed by a loss function $\ell$: $\ell(u-\tilde{u})$ stands for the cost of having predicted $\tilde{u}$ when the truth appears to be $u$.
\begin{ass}\label{ass:lip} The loss function $\ell$ is $L$-Lipschitz with respect to the Euclidean norm for some $L\leq 1$. That is, for all $x,x'\in \R^M
$, $|\ell(x')-\ell(x)|\le L\|x'-x\|$.
% \begin{equation*}
%\mathbb{E} \left[\exp{\frac{\|\xi\|}{C}}\right]=D<\infty,
% \end{equation*}
\end{ass}
\noindent
We impose the condition $L=1$ for the sake of simplicity, but note that it is not restrictive as we can rescale the loss anyway. This includes, for example, the Euclidean norm $\ell(x) = \|x\|$ or the max norm $\ell(x) = \max_{1\leq i\leq M}|x_i|$. However, the squared Euclidean norm, $\ell(x)=\|x\|^2$, only satisfies Assumption~\ref{ass:lip} when the process $(X_t)$ and the predictions are bounded.

We are now in position to define the generalization error.
\begin{dfn}
 The generalization error of an $M\times M$ matrix $Q$ is given by
$$ R(Q) = \mathbb{E}\left[\ell\left(X_t - Q  X_{t-1} \right)\right]. $$
\end{dfn}
Note that $R(Q)$ does not depend on $t$ as $(X_t)$ is stationary. Given a sample $X_1,\dots,X_n$ we can obviously estimate this error by its empirical counterpart, and define an estimator based on empirical risk minimization.
\begin{dfn}[{Fixed-rank estimator}]
The empirical error of an $M\times M$ matrix $Q$ is given by
$$ R_n(Q) = \frac{1}{n-1}\sum_{i=2}^n \ell\left(X_i -Q X_{i-1} \right). $$
The rank $r$ empirical risk minimizer (or fixed-rank estimator), $\hat{A}_r$, is defined, for any $r\in\{1,\dots,M\}$, by
 \begin{align}\label{estim}
 R_n(\hat{A}_r) = \min_{Q\in \mathcal{M}(r,\rho)} R_n(Q).
 \end{align}
\end{dfn}
\noindent
Note that in Assumption~\ref{ass:lip}, we only required the loss to be Lipschitz. However, in practice, loss functions that are also convex will be preferred in order to ensure the feasibility of the minimization of the empirical risk.
\begin{rmk}\label{rm:31} Algorithms are known to compute such a low-rank factorization in practice. The most popular method is to write $\hat{A}_r= B C^T$ for $M\times r$ matrices $B$ and $C$, and to optimize with respect to $B$ and $C$ (the constraint $\|\hat{A}_r\|_{S_\infty}\leq \rho$ can be ensured for example by imposing $\|B\|_{S_\infty} \leq \rho $ and $\|C\|_{S_\infty} \leq 1$). The popular ADMM method boils down to the alternate minimization with respect to $B$ and $C$, see Section 9 in~\cite{B11}. This strategy works well in practice, despite its lack of theoretical support in this situation. Even when $\ell$ is convex, the minimization problem is usually non convex with respect to the pair $(B,C)$. Still, recent works~\cite{Ge17} shows that this non-convexity is not ``severe'' in the sense that it is still possible to find a global minimum in a reasonable time. When $\ell$ is not convex, it is not currently known how to compute the exact minimizer in a feasible time.
\end{rmk}
\noindent
The following condition is purely technical and will only be used to make more accurate the statement ``for $n$ large enough''.
\begin{ass}\label{ass:bruit3} We have
$\displaystyle
 n \geq 1 +  16\delta_0^2  Mr \log (9rn) / \mathcal{V}_0
$ where $\mathcal{V}_0=8\e c^2d(2-\rho)/(1-\rho)^3$ and $\delta_0=2 c/(1-\rho)$.
\end{ass}

\begin{thm}
\label{thm-1}
 %Assume that $ n \geq 1 +  16\delta^2  Mr \log (9rn) / \mathcal{V} $. %with $$\mathcal{V}=V\cdot\frac{2-\rho}{(1-\rho)^3}\le\frac{2V}{(1-\rho)^3}$$
Let  Assumptions \ref{ass:rho}, \ref{ass:bruit2}, \ref{ass:lip} and \ref{ass:bruit3} hold.
 Then for any $\eta>0$, with probability larger than $1-\eta$, we have
\begin{multline}
%R(\hat{a}_r) \leq
%\min_{a \in{\cal M}(\rho, r) } R(a)
%+ 4(1+\rho)
%\sqrt{
%\frac{4\mathcal{V}  Mr \log (9rn) }{n-1}}
%\label{ineg}
%\\
%\nonumber
%+ 2(1+\rho) \log\left(\frac{4}{\eta}\right) \sqrt{\frac{\mathcal{V}}{(n-1)
% 4Mr \log (9rn)}}
%\\
%+\frac{1}{n}\cdot \frac{3\mathbb{E}\| \xi_1 \|}{1-\rho} %+\frac{2\delta}{n(n-1)}\log\left( \frac{2}{\eta} \right) %+\frac{\mathcal{V}}{2 n \delta }.
%\nonumber
R(\hat{A}_r) \leq
\min_{Q \in{\cal M}(\rho, r) } R(Q)
+ 4(1+\rho)
\sqrt{
\frac{4\mathcal{V}_0  M^2r \log (9rn) }{n-1}}
\label{ineg}
\\
\nonumber
+ 2(1+\rho) \log\left(\frac{4}{\eta}\right) \sqrt{\frac{\mathcal{V}_0}{4(n-1)
 r \log (9rn)}}
+\frac{\sqrt M}{n}\left(\frac{3 \e dc}{1-\rho}+\frac{\mathcal{V}_0}{2  \delta_0 } \right) +\frac{2\delta_0\sqrt{M}}{n(n-1)}\log\left( \frac{2}{\eta} \right).
\end{multline}
\end{thm}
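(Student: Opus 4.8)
The plan is to reduce the theorem to a uniform control of the empirical process $Q\mapsto R(Q)-R_n(Q)$ over $\mathcal{M}(\rho,r)$, obtained by combining a truncation argument, a covering‑number bound for $\mathcal{M}(\rho,r)$, and a Bernstein‑type deviation inequality valid for the $\tau$‑weakly dependent process $(X_t)$ of Section~\ref{s2}. First set up the oracle decomposition: let $Q^\star$ minimize $R$ over the compact set $\mathcal{M}(\rho,r)$. By definition \eqref{estim}, $R_n(\hat A_r)\le R_n(Q^\star)$, hence
$$R(\hat A_r)-R(Q^\star)\le\Bigl(R(\hat A_r)-R_n(\hat A_r)\Bigr)+\Bigl(R_n(Q^\star)-R(Q^\star)\Bigr)\le\sup_{Q\in\mathcal{M}(\rho,r)}\bigl(R(Q)-R_n(Q)\bigr)+\bigl(R_n(Q^\star)-R(Q^\star)\bigr),$$
so it suffices to bound, with high probability, the supremum on the right and, separately, the single‑matrix term (the latter by a one‑sided Bernstein inequality, which produces the $\log(4/\eta)$ contribution).

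The next step is truncation, needed because $\ell(X_i-QX_{i-1})$ is unbounded. From \eqref{AR1} and Assumption~\ref{ass:rho}, $\|X_t\|\le\sum_{k\ge0}\rho^k\|\xi_{t-k}\|$; Assumption~\ref{ass:bruit2} together with $\|\xi\|\le\sqrt M\max_j|\xi^{(j)}|$ then shows, applying Jensen's inequality to the convex combination $(1-\rho)\sum_k\rho^k$ in order to pull the exponential inside the geometric sum, that $\|X_t\|/\sqrt M$ is sub‑exponential with scale of order $\delta_0=2c/(1-\rho)$. Hence on an event $\Omega$ of probability at least $1-\eta/2$ one has $\max_{1\le i\le n}\|X_i\|\le\delta:=\delta_0\sqrt M\log(Cn\sqrt M/\eta)$, and off $\Omega$, as well as from replacing each $X_i$ by its truncation at level $\delta$, the resulting bias in $R_n$ and $R$ is controlled through $\E[\|X_i\|\mathbf 1_{\|X_i\|>\delta}]$ and, for this choice of $\delta$, contributes only the lower‑order terms $\tfrac{\sqrt M}{n}(\cdots)$ and $\tfrac{2\delta_0\sqrt M}{n(n-1)}\log(2/\eta)$ appearing in \eqref{ineg}. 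On $\Omega$ the summands satisfy $|\ell(X_i-QX_{i-1})|\le(1+\rho)\delta$ and, crucially, $Q\mapsto\ell(X_i-QX_{i-1})$ is $\delta$‑Lipschitz with respect to $\|\cdot\|_{S_\infty}$, since $|\ell(X_i-QX_{i-1})-\ell(X_i-Q'X_{i-1})|\le\|(Q-Q')X_{i-1}\|\le\|Q-Q'\|_{S_\infty}\,\|X_{i-1}\|$.

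Then I would discretize and apply concentration. As noted after \eqref{rankdec}, $\mathcal{M}(\rho,r)$ lies in a ball of Frobenius radius $\rho\sqrt r$ inside a manifold of dimension $r(2M-1)$, so it admits an $\epsilon$‑net $\mathcal N_\epsilon$ in $\|\cdot\|_{S_\infty}$ with $\log|\mathcal N_\epsilon|\le C\,rM\log(\rho\sqrt r/\epsilon)$; taking $\epsilon\asymp 1/n$ gives $\log|\mathcal N_\epsilon|\lesssim rM\log(9rn)$. For each fixed $Q\in\mathcal N_\epsilon$, $R(Q)-R_n(Q)$ is a normalized sum of $n-1$ centered, sub‑exponential, $\tau$‑weakly dependent variables; since $\tau_k\le C\rho^k$ decays geometrically, a Bernstein‑type inequality for $\tau$‑dependent sequences (obtained e.g.\ by coupling alternate blocks with independent copies at a total cost $\sum_k\tau_k<\infty$, or from the weak‑dependence results invoked in Section~\ref{s2}) yields a deviation bound with variance proxy of order $M\mathcal V_0$ and scale of order $\delta_0\sqrt M$. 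A union bound over $\mathcal N_\epsilon$, the $\delta$‑Lipschitz property to pass from the net to all of $\mathcal{M}(\rho,r)$ (discretization error $\le\delta\epsilon$, absorbed into the $\sqrt M/n$ terms), and the substitution $\log|\mathcal N_\epsilon|\lesssim rM\log(9rn)$ together produce the main term $4(1+\rho)\sqrt{4\mathcal V_0M^2r\log(9rn)/(n-1)}$, while the same Bernstein bound at the single matrix $Q^\star$ gives the $\log(4/\eta)$ term. Assumption~\ref{ass:bruit3} is exactly what guarantees that $\epsilon\asymp1/n$ is small relative to the net's resolution and that all the $O(1/n)$ and $O(1/n^2)$ remainders are genuinely lower order; a union bound over the two or three exceptional events (hence the $4/\eta$ and $2/\eta$) then yields \eqref{ineg}.

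The main obstacle is the concentration step: one needs a Bernstein‑type inequality that simultaneously handles the weak (non‑independent) dependence of $(X_t)$, accommodates unbounded, merely sub‑exponential summands, and is sharp enough in its constants that, after the union bound over a net of log‑cardinality $\asymp rM\log(9rn)$ and the truncation bookkeeping, one recovers precisely the stated $\mathcal V_0$ and $\delta_0$. Tracking the dependence on $M$ correctly — the effective variance scales like $M\mathcal V_0$ while the metric entropy scales like $rM$, which is what produces the $M^2r$ under the square root — and choosing the truncation level $\delta$ so that every $O(1/n)$ and $O(1/n^2)$ remainder lands as in \eqref{ineg} is where the real effort goes; the oracle decomposition, the covering‑number estimate, and the Lipschitz‑in‑$Q$ bound are routine by comparison.
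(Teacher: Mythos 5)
Your overall architecture matches the paper's: oracle decomposition via $R_n(\hat A_r)\le R_n(Q^\star)$, a covering number of $\mathcal{M}(\rho,r)$ of log-cardinality $\asymp Mr\log(9r/\epsilon)$ with $\epsilon\asymp 1/n$ (the paper gets this from Lemma~3.1 of \cite{CP11} plus the homogeneity/union-over-shells argument), a Bernstein-type deviation bound at each point of the net, a union bound, and the Lipschitz-in-$Q$ estimate $|\ell(X_i-QX_{i-1})-\ell(X_i-Q^\epsilon X_{i-1})|\le\|Q-Q^\epsilon\|_{S_\infty}\|X_{i-1}\|$ to pass from the net to the full class. Where you diverge is the treatment of unboundedness and dependence. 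The paper never truncates: it applies the Bernstein inequality of \cite{DF15} for \emph{separately Lipschitz functionals of iterated random functions} directly to $f(X_1,\dots,X_n)=\sum_i\ell(X_i-QX_{i-1})$ (this is where the $(1+\rho)$ factors and the exact constants $\mathcal V_0=8\mathrm{e}c^2d(2-\rho)/(1-\rho)^3$ and $\delta_0=2c/(1-\rho)$ come from, via moment bounds $\E\|\xi\|^k\le d(c\sqrt M)^k k!$), and it applies the same inequality a second time to $\sum_t\|X_t\|$ to control the discretization error $\epsilon\sum_t\|X_t\|/(n-1)$ — that second application, with the choice $y=2\delta\log(2/\eta)+V_{(n)}/(2\delta)$, is the actual source of the $\frac{\sqrt M}{n}(\cdots)$ and $\frac{2\delta_0\sqrt M}{n(n-1)}\log(2/\eta)$ remainders, not a truncation bias as you suggest. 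Your route (truncate $\|X_i\|$ at a level $\asymp\delta_0\sqrt M\log(n/\eta)$, then use a coupling/blocking Bernstein inequality for the $\tau$-dependent bounded summands) is a legitimate alternative and would prove a bound of the same order, but it cannot deliver the theorem as stated: the truncation level carries a $\log(n/\eta)$ factor into the range term, the coupling cost $\sum_k\tau_k$ enters the variance proxy with unspecified constants, and you explicitly leave the resulting concentration inequality — the one nontrivial ingredient — as an acknowledged obstacle rather than supplying it. So take this as a correct blueprint for a weaker version of \eqref{ineg} (same rate, inflated constants and extra logs in the remainders), with the missing piece being exactly the tool the paper imports from \cite{DF15}.
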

\begin{cor}Let  Assumptions \ref{ass:rho}, \ref{ass:bruit2}, \ref{ass:lip} and \ref{ass:bruit3} hold.
For any fixed $\eta>0$, we have with probability larger than $(1-\eta)$
$$
R(\hat{A}_r) \leq
\min_{Q \in{\cal M}(\rho, r) } R(Q)
+ D_1 \sqrt{rM^2 \frac{\log n}n},
$$
where $D_1$ denotes a positive constant, only  depending on $c$, $d$, $\rho$ and $\eta$.
\end{cor}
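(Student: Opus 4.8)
The plan is to deduce the corollary directly from Theorem~\ref{thm-1} by bounding each of the four remainder terms in~\eqref{ineg} by a constant multiple of $\sqrt{rM^2\log(n)/n}$, with all constants depending only on $c,d,\rho,\eta$; recall that $\mathcal{V}_0$ and $\delta_0$ are themselves explicit functions of $c,d,\rho$, so any quantity built out of them is admissible for $D_1$.

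The first step is to exploit Assumption~\ref{ass:bruit3}, which ties the dimension to the sample size. Dropping the additive $1$ and the factor $\log(9rn)>1$ on its right-hand side, it gives $Mr\le n\mathcal{V}_0/(16\delta_0^2)$, hence $r\le Mr\le Cn$ with $C:=\mathcal{V}_0/(16\delta_0^2)$, and in particular $n\ge 2$. From this one gets $\log(9rn)\le\log(9Cn^2)\le C_0\log n$ and $n-1\ge n/2$ for some constant $C_0=C_0(c,d,\rho)$. Inserting these two estimates into the leading (first) term of~\eqref{ineg} already produces
$$
4(1+\rho)\sqrt{\frac{4\mathcal{V}_0M^2r\log(9rn)}{n-1}}\;\le\;4(1+\rho)\sqrt{8\mathcal{V}_0C_0}\,\sqrt{\frac{rM^2\log n}{n}},
$$
which is exactly of the announced form.

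Next I would check that the remaining three terms, each divided by $\sqrt{rM^2\log(n)/n}$, stay bounded by constants depending only on $c,d,\rho,\eta$. For the second term, $2(1+\rho)\log(4/\eta)\sqrt{\mathcal{V}_0/(4(n-1)r\log(9rn))}$, this normalisation together with $n/(n-1)\le 2$ and $r,M\ge 1$ leaves a constant times $(\log(9rn)\log n)^{-1/2}$, which is bounded for $n\ge 2$. For the third term, $\frac{\sqrt M}{n}\bigl(\frac{3\e dc}{1-\rho}+\frac{\mathcal{V}_0}{2\delta_0}\bigr)$, the same normalisation leaves a constant times $(nr\log n)^{-1/2}$, again bounded. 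The fourth term, $\frac{2\delta_0\sqrt M}{n(n-1)}\log(2/\eta)$, decays even faster and is treated identically. Summing the four bounds and absorbing all multiplicative constants into a single $D_1=D_1(c,d,\rho,\eta)$ yields the claim.

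The one point that needs care — more a matter of bookkeeping than a genuine obstacle — is to make sure $D_1$ does not implicitly depend on $M$, $r$ or $n$. This is precisely where Assumption~\ref{ass:bruit3} is indispensable: in the high-dimensional regime $M$ (and hence $r$) may be much larger than $n$, and without the relation $Mr\lesssim n$ the factor $\log(9rn)$ appearing in~\eqref{ineg} could not be absorbed into $\log n$. Everything else reduces to elementary monotonicity in $r$ and $M$ and the bound $n-1\ge n/2$.
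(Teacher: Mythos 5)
Your argument is correct and is precisely the intended derivation: the paper states this corollary without a separate proof, as a direct simplification of the bound in Theorem~\ref{thm-1}, and your bookkeeping (in particular the key observation that Assumption~\ref{ass:bruit3} forces $Mr\lesssim n$ and hence $\log(9rn)\le C_0\log n$, plus $n-1\ge n/2$) is exactly what is needed to absorb all four remainder terms into a single $D_1\sqrt{rM^2\log(n)/n}$ with $D_1$ depending only on $c$, $d$, $\rho$ and $\eta$.
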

\begin{cor}Let  Assumptions \ref{ass:rho}, \ref{ass:bruit2}, \ref{ass:lip} and \ref{ass:bruit3} hold.
For $\eta=\eta_n=n^{-rM}$, we have for some suitable positive constant $D_2$ depending on $c$, $d$ and $\rho$ that
$$
R(\hat{A}_r) \leq
\min_{Q \in{\cal M}(\rho, r) } R(Q)
+ D_2 \sqrt{rM^2 \frac{\log n}n},
%+\frac{D_5\sqrt{M}}n,
$$
with probability larger that $1-n^{-rM}$.
\end{cor}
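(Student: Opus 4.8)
The plan is to derive the inequality directly from Theorem~\ref{thm-1}: one specialises the bound of that theorem to $\eta=\eta_n=n^{-rM}$ and then checks that, after this substitution, each of the four additive remainder terms on the right-hand side is at most a constant multiple of $\sqrt{rM^2\log n/n}$, the constant depending only on $c$, $d$ and $\rho$ (through $\mathcal{V}_0$ and $\delta_0$). The difference with the previous corollary is only that the two remainder terms carrying the factor $\log(1/\eta)$ now scale like $rM\log n$ instead of being $O(1)$, so the point is to verify that they remain dominated.

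First I would record a few elementary preliminaries. Assumption~\ref{ass:bruit3} forces $n\geq 2$ (the added term is strictly positive) and $16\delta_0^2 Mr\log(9rn)\leq \mathcal{V}_0(n-1)<\mathcal{V}_0 n$; since $\log(9rn)\geq\log 9$ and $M\geq 1$ this already gives $r\leq c_0 n$ for a constant $c_0=c_0(c,d,\rho)$, hence $9rn\leq 9c_0 n^2$ and therefore $\log(9rn)\leq C\log n$ for a constant $C=C(c,d,\rho)$ (using $\log n\geq\log 2>0$ to absorb the additive term). Of course $n-1\geq n/2$ as well. Second, for $n\geq 2$ one has $\log(4/\eta_n)=\log 4+rM\log n\leq 3rM\log n$ and $\log(2/\eta_n)\leq 2rM\log n$.

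Then I would bound the four terms of the conclusion of Theorem~\ref{thm-1} one at a time. The leading term $4(1+\rho)\sqrt{4\mathcal{V}_0 M^2 r\log(9rn)/(n-1)}$ is, after the two reductions above, already of the announced order. In the second term, inserting $\log(4/\eta_n)\leq 3rM\log n$ produces an expression of size $M\sqrt{r}\,\log n/\sqrt{(n-1)\log(9rn)}$; since $9rn\geq n$ we have $\log n/\sqrt{\log(9rn)}\leq\sqrt{\log n}$, so this is again $O(\sqrt{rM^2\log n/n})$. The third term equals $C_3\sqrt{M}/n$ with $C_3$ depending only on $c,d,\rho$, and this is trivially $O(\sqrt{rM^2\log n/n})$ because $1/n\leq rM\log n$. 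Finally, inserting $\log(2/\eta_n)\leq 2rM\log n$ in the fourth term gives a quantity of order $rM^{3/2}\log n/(n(n-1))$; squaring and dividing by $rM^2\log n/n$ reduces the required bound to showing that $rM\log n/(n(n-1)^2)$ stays bounded, which follows from $rM\log n\leq rM\log(9rn)\leq\mathcal{V}_0(n-1)/(16\delta_0^2)$, i.e.\ once more from Assumption~\ref{ass:bruit3}. Summing the four bounds and collecting absolute constants yields the claim with an explicit $D_2=D_2(c,d,\rho)$, on the event of probability at least $1-\eta_n=1-n^{-rM}$ provided by Theorem~\ref{thm-1}.

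The argument is purely bookkeeping and there is no genuine obstacle; the only point needing a little care is precisely that the two remainder terms proportional to $\log(1/\eta_n)=rM\log n$ remain dominated by the leading term even though this factor is no longer constant. This is where the size restriction of Assumption~\ref{ass:bruit3} is used (to control the fourth term through $rM\log n\leq\mathcal{V}_0(n-1)/(16\delta_0^2)$) and where the elementary inequality $\log n\leq\sqrt{\log n\,\log(9rn)}$ is used (for the second term).
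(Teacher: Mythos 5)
Your proposal is correct and coincides with what the paper leaves implicit: the corollary is stated without proof as a direct specialization of Theorem~\ref{thm-1} to $\eta=n^{-rM}$, and your term-by-term bookkeeping (in particular using Assumption~\ref{ass:bruit3} to get $\log(9rn)\leq C\log n$ and to dominate the term carrying $\log(2/\eta_n)$, and the inequality $\log n\leq\sqrt{\log n\,\log(9rn)}$ for the term carrying $\log(4/\eta_n)$) supplies exactly the missing verification.
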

\begin{rmk}\label{D+A}[Variations on low rank VAR(1)]
Instead of considering finite rank matrices, an alternative is to consider a low rank perturbation around another matrix.
As an example, the $(D+A)$-model  consists in a diagonal matrix $D$, and $D+A\in{\cal M}(\rho,M)$. Then $D$ is estimated as in the so-called AR case with $M$ decoupled (independent) AR(1)-models in one dimension. The parameter set is now a subset of a Riemanniann manifold with dimension $r(2M-1)+M<2Mr$.
Recall that, in order that $D$ be estimable, the AR(1)-coordinate models must be stable thus their coefficients belong $[-1,1]$. Indeed $D+A\in{\cal M}(\rho,M)$ and $D\in{\cal M}(1,M)$ imply with rank$(A)=r$ that $A\in{\cal M}(1+\rho,r)$.
\end{rmk}
\medskip

Coming back to the low rank model,
note that if the true rank $r_0$ of $A$ is known, then $A\in\mathcal{M}(r_0,\rho)$ leads to
$$
R(\hat{A}_{r_0}) \leq R(A)
+ D_2 \sqrt{r_0 M^2 \frac{\log n}n},
$$
which means that the estimator $\hat{A}_{r_0}$ predicts as well as the true matrix $A$ when $n\rightarrow \infty$ and $r_0$, $M$ satisfy $r_0 M^2 = o(n / \log(n))$. This is true when $M$ and $r_0$ are constants, but also covers high-dimensional asymptotics like $r_0 = \mathcal{O}(1)$ and $M = M_n = \mathcal{O}(n^{1/3})$. However, this estimator requires the knowledge of $r_0$, which is usually not the case. For this reason, we now describe a  model selection procedure for $r$, based on rank penalization.

\noindent
\begin{dfn}[Rank-penalized estimator]\label{def:rank-pen}
 Let  Assumption  \ref{ass:bruit3} holds for at least one value $r\leq M$.  Define $\bar{r}(M,n)$ as the largest integer $r\leq M$ such that $ n \geq 1 +  16 \delta_0^2  r \log (9rn) / \mathcal{V}_0 $.
The rank-penalized estimator  is defined by $ \hat{A} = \hat{A}_{\hat{r}} $ where
$$
\hat{r} = \underset{r=1,\dots, \bar{r}(M,n)}{\argmin} \Biggl[
 R_n(\hat{A}_r) + 2(1+\rho)
\sqrt{
\frac{4\mathcal{V}_0  M^2r \log (9rn) }{n-1}}
 \Biggr].
$$
\end{dfn}
\noindent
In practice, in order to compute $\hat{r}$, we need to know $\rho$, $c$ and $d$ which is still not realistic. However, for such a model selection by penalized risk minimization, it is known anyway that the constants in front of the penalization are usually too large. The slope heuristic leads to better results in practice~\cite{birge2007minimal,BMM12}, we provide more details in Section~\ref{s4}. %In the simulations, it appears also that the procedure is actually not that sensitive to the choice of the multiplicative constant in the simulations in this case.
We now provide a theoretical study of the rank-penalized estimator. The following theorem states this estimator predicts almost as well as the best fixed-rank estimator. Of course, which fixed-rank estimator is the best is not known in advance, hence, this estimator is often referred to as the oracle, and the inequality in the theorem as an oracle inequality.
\begin{thm}
\label{thm-2}Let  Assumptions \ref{ass:rho},  \ref{ass:bruit2}, \ref{ass:lip} and \ref{ass:bruit3} hold. Let $\eta>0$.
With probability at least $1-\eta$, we have
\begin{multline*}
 R(\hat{A})
 \leq  \min_{1\leq r\leq \bar{r}(M,n)} \left[ \min_{Q \in {\cal M}(\rho, r) } R(Q) + 4(1+\rho)
\sqrt{
\frac{4\mathcal{V}_0
 M^2r \log (9rn)}{n-1}}  \right]
 \\
 + 3(1+\rho)
\log
\left(\frac{4M}{\eta}\right) \sqrt{ \frac{\mathcal{V}_0}{4(n-1)  \log (9n)}}
+\frac{\sqrt M}{n}\left(\frac{3 \mbox{e} dc}{1-\rho}+\frac{\mathcal{V}_0}{2  \delta_0 } \right) +\frac{2\delta_0\sqrt{M}}{n(n-1)}\log\left( \frac{2}{\eta} \right).
\end{multline*}
\end{thm}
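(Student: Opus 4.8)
\noindent\emph{Proof strategy.}
The plan is to run the classical ``model selection by penalization'' argument (see e.g.\ \cite{BMM12}). Observe first that the penalty
$\mathrm{pen}(r):=2(1+\rho)\sqrt{4\mathcal{V}_0 M^2 r\log(9rn)/(n-1)}$
is exactly one half of the leading complexity term in Theorem~\ref{thm-1}; this calibration is what makes the argument close. The starting point is that the proof of Theorem~\ref{thm-1} actually delivers a \emph{two-sided, uniform} deviation bound: for each fixed $r\in\{1,\dots,\bar r(M,n)\}$ and each $\eta>0$, with probability at least $1-\eta$,
\[
\sup_{Q\in\mathcal{M}(\rho,r)}\bigl|R_n(Q)-R(Q)\bigr|\;\le\;\mathrm{pen}(r)+\varphi_r(\eta)+\psi(\eta),
\]
where $\varphi_r(\eta)=(1+\rho)\log(4/\eta)\sqrt{\mathcal{V}_0/(4(n-1)\,r\log(9rn))}$ is the rank-dependent fluctuation and $\psi(\eta)=\tfrac{\sqrt M}{2n}\bigl(\tfrac{3\e dc}{1-\rho}+\tfrac{\mathcal{V}_0}{2\delta_0}\bigr)+\tfrac{\delta_0\sqrt M}{n(n-1)}\log(2/\eta)$ is a remainder coming from a ``bad event'' (roughly, $\max_i\|X_i\|$ exceeding the truncation threshold $\delta_0\sqrt M$) whose probability does \emph{not} depend on $r$. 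I would use this estimate as a black box.

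\textbf{Step 1: uniform control over all admissible ranks.} I would split the budget $\eta$ between the rank-free bad event and the $\bar r(M,n)\le M$ rank-dependent complexity events, applying the displayed inequality to each $r\le\bar r(M,n)$ at a level of order $\eta/M$. A union bound then shows that, with probability at least $1-\eta$, simultaneously for every $r\le\bar r(M,n)$ and every $Q\in\mathcal{M}(\rho,r)$,
\[
\bigl|R_n(Q)-R(Q)\bigr|\;\le\;\mathrm{pen}(r)+\varphi(\eta)+\psi(\eta),
\qquad
\varphi(\eta):=(1+\rho)\log\!\Bigl(\tfrac{4M}{\eta}\Bigr)\sqrt{\tfrac{\mathcal{V}_0}{4(n-1)\log(9n)}},
\]
where I have absorbed $\varphi_r(\eta/M)$ into the rank-free quantity $\varphi(\eta)$ using $r\ge1$ and $\log(9rn)\ge\log(9n)$. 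The essential point is that only $\mathrm{pen}$ and $\varphi_r$ are inflated by the union bound (which is what turns $\log(4/\eta)$ into $\log(4M/\eta)$), whereas $\psi$ is untouched because the bad event is common to all ranks; this is exactly why the last two terms of Theorem~\ref{thm-2} reproduce those of Theorem~\ref{thm-1}.

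\textbf{Step 2: the oracle inequality.} Work on the event of Step~1, fix an arbitrary $r^\ast\in\{1,\dots,\bar r(M,n)\}$, and let $Q^\ast\in\argmin_{Q\in\mathcal{M}(\rho,r^\ast)}R(Q)$. By the definition of $\hat r$ and since $\hat A_{r^\ast}$ minimises $R_n$ over $\mathcal{M}(\rho,r^\ast)\ni Q^\ast$,
\[
R_n(\hat A_{\hat r})+\mathrm{pen}(\hat r)\;\le\;R_n(\hat A_{r^\ast})+\mathrm{pen}(r^\ast)\;\le\;R_n(Q^\ast)+\mathrm{pen}(r^\ast).
\]
Since $\hat A_{\hat r}\in\mathcal{M}(\rho,\hat r)$ with $\hat r\le\bar r(M,n)$, Step~1 gives $R(\hat A_{\hat r})\le R_n(\hat A_{\hat r})+\mathrm{pen}(\hat r)+\varphi(\eta)+\psi(\eta)$, and applying Step~1 at rank $r^\ast$ gives $R_n(Q^\ast)\le R(Q^\ast)+\mathrm{pen}(r^\ast)+\varphi(\eta)+\psi(\eta)$. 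Chaining, the terms $\pm\mathrm{pen}(\hat r)$ cancel and one is left with $R(\hat A)=R(\hat A_{\hat r})\le R(Q^\ast)+2\,\mathrm{pen}(r^\ast)+2\varphi(\eta)+2\psi(\eta)$. As $2\,\mathrm{pen}(r^\ast)=4(1+\rho)\sqrt{4\mathcal{V}_0 M^2 r^\ast\log(9r^\ast n)/(n-1)}$ and $R(Q^\ast)=\min_{Q\in\mathcal{M}(\rho,r^\ast)}R(Q)$, minimising over the free index $r^\ast$ yields precisely the bracketed term of Theorem~\ref{thm-2}. It then only remains to note that $2\psi(\eta)$ is exactly the last two displayed terms of the theorem, and to tidy the numerical constant in front of $\varphi(\eta)$ (the coefficient $3$, rather than $2$, accommodates bounding $R_n(\hat A_{r^\ast})$ through Theorem~\ref{thm-1} directly instead of through $R_n(Q^\ast)$, which costs one extra fluctuation term evaluated at a rank $\ge1$); this concludes.

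I expect the only genuinely delicate point to be the bookkeeping in Step~1: one must \emph{first} isolate the rank-independent remainder $\psi$ and only \emph{then} union-bound the rank-dependent complexity and fluctuation over $r\le\bar r(M,n)$, for otherwise $\psi$ would pick up a spurious $\log M$ factor and the theorem would not come out as stated. All the genuine probabilistic content — the weak-dependence / sub-exponential concentration of $R_n$ around $R$ uniformly over the manifold $\mathcal{M}(\rho,r)$ — has already been carried out for Theorem~\ref{thm-1} and is reused here without change.
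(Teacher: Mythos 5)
Your proposal is correct and follows essentially the same route as the paper: a union bound over the ranks $r\le\bar r(M,n)$ that inflates only the rank-dependent deviation terms (turning $\log(4/\eta)$ into $\log(4M/\eta)$) while the common bad event on $\sum_t\|X_t\|$ is budgeted once at level $\eta/2$, followed by the standard penalized-model-selection chaining on the favorable event. Your Step 2 chain $R_n(\hat A_{\hat r})+\mathrm{pen}(\hat r)\le R_n(Q^\ast)+\mathrm{pen}(r^\ast)$ is in fact marginally tighter than the paper's (which passes through $\min_r[R_n(\hat A_r)+x_r]$ and keeps an extra fluctuation term $x_{\hat r}-\mathrm{pen}(\hat r)$, whence its coefficient $3$ rather than your $2$ in front of the $\log(4M/\eta)$ term), but this only improves a constant and the argument is otherwise the same.
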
\noindent
A more precise analysis of the involved factors yields the following corollaries.
\begin{cor} Under  Assumptions \ref{ass:rho}, \ref{ass:bruit2}, \ref{ass:lip} and \ref{ass:bruit3}, for any fixed $\eta>0$,with probability larger than $1-\eta$
$$
 R(\hat{A})
 \leq  \min_{1\leq r\leq \bar{r}(M,n)} \left[ \min_{Q \in {\cal M}(\rho, r) } R(Q) + 4(1+\rho)
\sqrt{
\frac{4\mathcal{V}_0
 M^2r \log (9rn)}{n-1}}  \right]
+ D_3\sqrt{\frac{(\log M)^2}{n\log n}} +D_4 \frac{\sqrt{M}}{n}.
$$
where $D_3$ and $D_4$ are positive constants depending on $c$, $d$, $\rho$ and $\eta$.
\end{cor}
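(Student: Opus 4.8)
The plan is to use Theorem~\ref{thm-2} as a black box and merely tidy up its lower-order terms; no new probabilistic argument is required. The leading contribution $\min_{1\le r\le \bar{r}(M,n)}\bigl[\min_{Q\in\mathcal{M}(\rho,r)}R(Q)+4(1+\rho)\sqrt{4\mathcal{V}_0 M^2 r\log(9rn)/(n-1)}\bigr]$ is reproduced verbatim, so the whole task reduces to bounding the three remaining summands in the conclusion of Theorem~\ref{thm-2}: the ``confidence'' term $3(1+\rho)\log(4M/\eta)\sqrt{\mathcal{V}_0/(4(n-1)\log(9n))}$, the term $\tfrac{\sqrt M}{n}\bigl(\tfrac{3\e dc}{1-\rho}+\tfrac{\mathcal{V}_0}{2\delta_0}\bigr)$, and the term $\tfrac{2\delta_0\sqrt M}{n(n-1)}\log(2/\eta)$.

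First I would record that Assumption~\ref{ass:bruit3} already forces $n\ge 2$ (the additive term in that inequality is strictly positive), whence $n-1\ge n/2$ and $n(n-1)\ge n$. For the confidence term, I would write $\log(4M/\eta)=\log(4/\eta)+\log M$ and, working in the meaningful regime $M\ge 2$, bound this by $C_\eta\log M$ with $C_\eta=1+\log(4/\eta)/\log 2$ (since $\log M\ge\log 2$); combined with $\log(9n)\ge\log n$ and $n-1\ge n/2$ this gives $3(1+\rho)\log(4M/\eta)\sqrt{\mathcal{V}_0/(4(n-1)\log(9n))}\le 3(1+\rho)C_\eta\sqrt{\mathcal{V}_0/2}\,\sqrt{(\log M)^2/(n\log n)}$, i.e.\ a contribution of the advertised form whose prefactor goes into $D_3$. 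The remaining two summands are both $O(\sqrt M/n)$: the first of them already is, and for the second, $n(n-1)\ge n$ gives $\tfrac{2\delta_0\sqrt M}{n(n-1)}\log(2/\eta)\le 2\delta_0\log(2/\eta)\,\tfrac{\sqrt M}{n}$; adding them yields $D_4\tfrac{\sqrt M}{n}$ with $D_4=\tfrac{3\e dc}{1-\rho}+\tfrac{\mathcal{V}_0}{2\delta_0}+2\delta_0\log(2/\eta)$.

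Finally I would note, using $\mathcal{V}_0=8\e c^2 d(2-\rho)/(1-\rho)^3$ and $\delta_0=2c/(1-\rho)$, that $\mathcal{V}_0$ and $\delta_0$ depend only on $c,d,\rho$, while $C_\eta$ depends only on $\eta$; hence $D_3=3(1+\rho)C_\eta\sqrt{\mathcal{V}_0/2}$ and $D_4$ depend only on $c,d,\rho,\eta$, which is the claim. There is no real obstacle here --- the corollary is a purely cosmetic repackaging of Theorem~\ref{thm-2} --- the only point requiring a modicum of care being the edge cases $M=1$ and small $n$; the latter is excluded by Assumption~\ref{ass:bruit3}, and for the former one simply replaces $\log M$ by $1+\log M$ throughout (enlarging $D_3$ accordingly) so that the bound remains valid.
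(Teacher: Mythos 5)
Your proposal is correct and matches the paper's (implicit) derivation: the corollary follows from Theorem~\ref{thm-2} exactly by the elementary bounds you describe, absorbing $\log(4M/\eta)$ into a constant multiple of $\log M$ via $n-1\ge n/2$ and $\log(9n)\ge\log n$, and collecting the two $O(\sqrt{M}/n)$ remainders into $D_4$. Your observation about the degenerate case $M=1$ (where $(\log M)^2=0$ makes the stated bound vacuous for that term) is a fair point the paper glosses over, and replacing $\log M$ by $1+\log M$ is the right repair.
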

\begin{cor}Let  Assumptions \ref{ass:rho}, \ref{ass:bruit2}, , \ref{ass:lip} and \ref{ass:bruit3} hold.
For $\eta=\eta_n=n^{-rM}$, we have for some suitable positive constant $D_5$ depending on $c$, $d$ and $\rho$ that
$$
 R(\hat{A})
 \leq  \min_{1\leq r\leq \bar{r}(M,n)} \left[ \min_{Q \in {\cal M}(\rho, r) } R(Q) + 4(1+\rho)
\sqrt{
\frac{4\mathcal{V}_0
 M^2r \log (9rn)}{n-1}}  \right]
 + D_5\log(M)\sqrt{\frac{M^2\log n}{n}}
$$
with probability larger that $1-n^{-M}$.
\end{cor}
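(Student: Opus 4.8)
The plan is to specialize Theorem~\ref{thm-2} to $\eta=\eta_n=n^{-M}$ (the value matching the announced probability $1-n^{-M}$) and to merge the three error terms that follow the minimum into a single remainder of order $\log(M)\sqrt{M^2\log n/n}$. Since $\mathcal{V}_0=8\e c^2d(2-\rho)/(1-\rho)^3$ and $\delta_0=2c/(1-\rho)$ depend only on $c,d,\rho$, any multiplicative factor built from $\mathcal{V}_0$, $\delta_0$ and $1+\rho$ is a constant of the permitted type and will be folded into $D_5$. The probability statement is immediate: Theorem~\ref{thm-2} holds on an event of probability at least $1-\eta_n$ and its inequality holds verbatim there, so only its right-hand side must be controlled, the minimum over $r$ being carried over unchanged.

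First I would bound the logarithmic factor $\log(4M/\eta_n)=\log(4M)+M\log n\le C_1\,M\log(M)\log n$ (a crude bound, valid for $M\ge 3$; the precise form is immaterial), and then use $(n-1)\log(9n)\ge\tfrac{1}{2}n\log n$, valid for $n\ge 2$, to get that the first additive term of Theorem~\ref{thm-2} is at most $3(1+\rho)C_1\sqrt{\mathcal{V}_0}\,M\log(M)\log n/\sqrt{2n\log n}=C_2(c,d,\rho)\,\log(M)\sqrt{M^2\log n/n}$. The term $\tfrac{\sqrt M}{n}\bigl(\tfrac{3\e dc}{1-\rho}+\tfrac{\mathcal{V}_0}{2\delta_0}\bigr)$ is of order $\sqrt M/n$, and, bounding $\log(2/\eta_n)\le\log 2+M\log n$, the last term is of order $M^{3/2}\log n/n^2$; both are dominated by $\sqrt{M^2\log n/n}$ as soon as $M\lesssim n/\log n$, which is precisely what Assumption~\ref{ass:bruit3} guarantees (used with $r=1$, since it is monotone in $r$ and is required to invoke Theorem~\ref{thm-2}, so $M\le(n-1)\mathcal{V}_0/(16\delta_0^2\log(9n))$). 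Adding the three bounds, all the terms after the minimum are at most $D_5\log(M)\sqrt{M^2\log n/n}$ for $D_5=D_5(c,d,\rho)$, which is exactly the announced inequality.

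There is no genuine conceptual obstacle; the content is the bookkeeping that arranges the stated remainder to absorb all three terms at once. The one point to watch is that the order comparisons above hold only beyond fixed thresholds on $n$ and on the ratio $M/\log n$; these are met throughout the range where Theorem~\ref{thm-2} applies, since Assumption~\ref{ass:bruit3} forces $n$ to be large relative to $M$, while for the residual small values of $M$ one reads $\log M$ as $\log(M)\vee 1$. I would also observe that the $\log M$ factor is slack: keeping $\log(4M/\eta_n)\asymp M\log n$ without the crude step yields the sharper remainder $D\sqrt{M^2\log n/n}$, but the weaker form stated is enough for the uses made of it in Sections~\ref{s4}--\ref{s5}.
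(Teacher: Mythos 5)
Your proof is correct and matches the paper's (implicit) derivation: the corollary is stated without proof as following from Theorem~\ref{thm-2} by ``a more precise analysis of the involved factors,'' and your bookkeeping --- taking $\eta=n^{-M}$, bounding $\log(4M/\eta)\lesssim M\log(M)\log n$ so the first remainder term becomes $C\log(M)\sqrt{M^2\log n/n}$, and absorbing the two $\sqrt{M}/n$-type terms using the bound on $M$ forced by Assumption~\ref{ass:bruit3} --- is exactly that analysis. Your side remarks are also apt: the $\log M$ factor is indeed slack, the stated $\eta_n=n^{-rM}$ only makes sense here read as $n^{-M}$ since $r$ is a bound variable of the minimum, and the cases $M\in\{1,2\}$ require reading $\log M$ as $\log(M)\vee 1$.
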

When $n$ is large enough so that $r_0\leq \bar{r}(M,n)$, we obtain
$$
 R(\hat{A})
 \leq  R(A) + 4(1+\rho)
\sqrt{
\frac{4\mathcal{V}_0
 M^2r_0 \log (9r_0 n)}{n-1}}
 + D_5\log(M)\sqrt{\frac{M^2\log n}{n}}
$$
so the estimator $\hat{A}$ is adaptive: it does not depend on $r_0$.

\section{Simulations}\label{s4}

\subsection{Simulation design}

In order to evaluate the performance of low-rank reconstruction, we run simulations for different values of the parameters $r_0$ and $n$. We set $M=100$ to have reasonable computation time. The simulations are done with Python and are available online (\cite{Gar19}).

More precisely, for  $(r_0,n)$ fixed,
\begin{itemize}
    \item we generate a matrix of rank $r_0$, as in \eqref{rankdec}, by setting
    $$A=U D V^T$$ where $U$ and $V$ are $M\times r_0$ semi-unitary matrices and $D$ is a diagonal matrix whose diagonal coordinates follow a distribution Beta of parameters $\lambda$ and $1$. The matrices $U$ and $V$ are obtained making orthogonal two $M\times r_0$ matrices whose coefficients are i.i.d uniform on $[0,1]$. Therefore, when $\lambda=1$, the distribution of the singular values of $A$ is uniform. When $\lambda$ is smaller, the singular values tend to be smaller and $A$ is close to some matrix with lower rank.
    \item we generate a sample of length $n$ from an $\mathbb{R}^M$ valued VAR(1)  process $(X_t)_{t\geq 0}$  with
$$ X_t = A X_{t-1} + \xi_t $$
where the $\xi_t$ are truncated centered Gaussian variables with variance $\sigma^2 \mbox{Id}_{M}$, where the i.i.d. coordinates admit the support  $[-10,10]$, and $\sigma=1$. \end{itemize}

\noindent
For each triplet $(r_0,n,\lambda)$, such a data set is simulated $100$ times.

 \subsection{Estimators and quality criteria}

Our estimation procedures use the quadratic loss, that is
$$R_n(Q) = \frac{1}{n-1}\sum_{t=2}^{n} \|X_{t} - Q X_{t-1} \|^2,$$
where we denote by $\|\cdot\|$  the Euclidean norm on $\R^M$. The use of quadratic loss allows one to obtain exact expression for some of the minimization, which speeds up convergence.
Note also that since we use truncated Gaussian Noise,  Assumption \ref{ass:lip} is satisfied.
\medskip

We compare several  estimators in which minimization of this empirical risk (penalized or not) over $\mathcal{M}(\rho,r)$ has to be computed. We use in the simulations $\rho=1$.
As we state in Remark~\ref{rm:31}, we look for minimizers of the form $\hat{Q}= B C^T$ for $M\times r$ matrices $B$ and $C$, and we optimize with respect to $B$ and $C$ by imposing $\|B\|_{S_\infty} \leq 1 $ and $\|C\|_{S_\infty} \leq 1$). We use here the ADMM method that alternates minimization with respect to $B$ and $C$.

\noindent
For each simulation, we compute four estimators:
\begin{itemize}
	\item The standard full-rank VAR(1) estimator magenta(the fixed-rank estimator with rank $M$) for the quadratic loss, i.e.:
	$$\underset{Q \in \mathcal{M}(1,M)}{\argmin}  R_n(Q)$$
	\item The oracle VAR(1) (the fixed rank estimator with rank $r_0$), where we suppose that we know the underlying rank $r_0$. %\sout{We then compute the fixed rank estimator defined in (\ref*{estim}) for $r=r_0$ and $\rho = 1$ . This estimator is a benchmark to compare with , so we don't have to make the restrictive assumption that $\rho < 1$.}
	$$ \underset{ Q \in \mathcal{M}(1,r_0)}{\argmin}  R_n(Q) $$
	
	\item The rank-penalized estimator. Note that this estimator can also be rewritten as 
		\begin{equation}\label{eq:toto1} \underset{Q \in \mathcal{M}(1,M)}{\argmin}\left\{ R_n(Q)  + C_1 \sqrt{{\rm rank}(Q)  [C_2 \log (\ {\rm rank}(Q) )] }\right\},
		\end{equation} 
	where $C_1,C_2$  depend on $M$, $n$ and several unknown constants given in Definition \ref{def:rank-pen}. 
	We choose to implement a simpler (and not so different) version of this estimator given by 
	\begin{equation}\label{eq:toto} \hat{Q}_C = \underset{Q \in \mathcal{M}(1,M)}{\argmin}\left\{ R_n(Q)  + C \sqrt{{\rm rank}(Q) }\right\}.
		\end{equation} 
This second estimator have similar performances to \eqref{eq:toto1} for small rank $r_0$ and performs better for higher ranks.
As we explained in Section \ref{s3} the constant in the penalty of the rank-penalized estimator is however too large in practice. A very popular way to fix this is to calibrate this constant $C$ via the slope heuristic. This is what we do here. The slope heuristic can be summarized as follows.
One studies how ${\rm rank}(\hat{Q}_C)$ decreases when $C$ grows. Select as an estimator $\hat{Q}=\hat{Q}_{2 C^*}$ where $C^*$ corresponds to the largest decay in ${\rm rank}(\hat{Q}_C)$. The slope heuristic was introduced by~\cite{birge2007minimal}, more details on its implementation are given in~\cite{BMM12} and its theoretical optimality (for i.i.d data) is studied in~\cite{arlot2009data}. We refer the reader to~\cite{arlot2019minimal} for a recent overview.
	
	\item The near low-rank estimator that we called here \textit{nuclear} as defined in \cite{negahban2011estimation} or \cite{ji2009accelerated}. It introduces an empirical risk minimization penalized by the nuclear norm $\|\cdot\|_*$ . We use the method proposed in \cite{ji2009accelerated} to compute an approximation of this solution. We also use the slope heuristics to compute the penalization constant $C_{\textrm{nuc}}$
$$ \underset{Q \in \mathcal{M}(1,M)}{\argmin}\left\{ R_n(Q)  + C_{\textrm{nuc}} \|Q\|_*  \right\}$$
\end{itemize}  
We want to study the performances of our estimators and to compare them with usual nuclear estimator. As we are interested in studying performances in terms of prediction,  to assess the quality of an estimator $\tilde{A}$, we calculate the excess risk computed on a new sample $X_1^*,\ldots,X_n^*$ generated as in the previous part with the same matrix $A$ used to generate our data:
$$R^*_n(\tilde{A})-R^*_n(A),$$
where
$$R_n^*(\tilde{A}) = \frac{1}{n-1}\sum_{t=2}^{n} \|X_{t}^* - \tilde{A} X_{t-1}^* \|^2.$$

\subsection{Results}

We perform $100$ simulations for different values for the triplets $(r_0,n,\lambda)$. The results are given in the following tables and figures.
Tables~\ref{Table1} and~\ref{Table2} contain the {mean} of the excess risk over the  simulations for the four estimators and different values of  $r_0$, $n$, and $\lambda$. Figures~\ref{Fig1} to~\ref{Fig4} show the dispersion of the excess risk over the $N=100$ simulations for $n \in {1000,500,200}$ and $\lambda = 1$ and for $n=1000$ and $\lambda=0.5$

Moreover  $*$ means that the convergence step of the algorithm was too important.

\begin{table}[htp!]
	\centering
	\begin{tabular}{|l|l|rrrrrrrrrrr|}
		\hline
		& rank $r_0$ &       2   &       3   &       5   &       7   &       10  &       15  &       20  &       30  &       50  &       75  &       100 \\
		\hline
		$\lambda =1$&full rank & 3.27 & 3.21 & 3.21 & 3.23 & 3.20 & 3.15 & 3.27 & 3.26 & 3.25 & 3.24 & 3.28 \\
		&nuclear   & 1.03 & 1.00 & 1.04 & 1.09 & 1.11 & 1.17 & 1.32 & 1.48 & 1.83 & 2.29 & 2.83 \\
		&oracle    & 0.21 & 0.28 & 0.51 & 0.66 & 0.85 & 1.21 & 1.60 & 2.13 & 2.81 & 3.19 & 3.28 \\
		&penalized & 0.14 & 0.17 & 0.28 & 0.54 & 0.70 & 1.39 & 1.40 & 1.78 & 2.34 & 3.10 & 3.55 \\
		\hline
		$\lambda =0.5$&full rank & 3.21 & 3.25 & 3.26 & 3.20 & 3.24 & 3.27 & 3.24 & 3.25 & 3.28 & 3.26 & 3.24 \\
		&nuclear   & 1.28 & 1.31 & 1.35 & 1.31 & 1.37 & 1.43 & 1.47 & 1.55 & 1.76 & 1.99 & 2.22 \\
		&oracle    & 0.21 & 0.32 & 0.53 & 0.68 & 0.97 & 1.35 & 1.70 & 2.19 & 2.89 & 3.21 & 3.24 \\
		&penalized & 0.12 & 0.13 & 0.20 & 0.25 & 0.41 & 0.74 & 1.07 & 1.57 & 1.89 & 2.78 & 3.10 \\
		\hline
	\end{tabular}
	\caption{Mean excess risk  for $n = 1000$ and $\lambda\in\{1, 0.5\}$.}
	\label{Table1}
\end{table}

\begin{table}[htp!]
	\centering
	\begin{tabular}{|l|l|rrrrrrrrrrr|}
		\hline
		& rank $r_0$ &       2   &       3   &       5   &       7   &       10  &       15  &       20  &       30  &       50  &       75  &       100 \\
		\hline
		$n=200$&full rank & 29.59 & 29.34 & 29.91 & 29.54 & 29.59 & 29.74 & 29.62 & 29.52 & 29.67 & 29.49 & 29.68 \\
		&nuclear   &  6.73 &  6.83 &  6.95 &  6.11 &  6.59 &  6.92 &  8.33 &  8.44 &  8.59 & 10.54 & 12.19 \\
		&oracle    &  2.00 &  3.09 &  4.88 &  6.52 &  8.75 & 12.41 & 15.22 & 19.90 & 26.03 & 29.03 & 29.68 \\
		&penalized &  1.04 &  1.09 &  1.27 &  1.43 & 27.05 & 27.55 & 27.83 & 28.66 & 28.62 & 28.75 & 29.64 \\
		\hline
		$n=500$&full rank & 7.30 & 7.26 & 7.26 & 7.27 & 7.39 & 7.24 & 7.25 & 7.23 & 7.39 & 7.37 & 7.19 \\
		&nuclear   & 1.07 & 1.06 & 1.13 & 1.23 & 1.41 & 1.47 & 1.71 & 2.11 & 2.96 & 3.97 & 4.93 \\
		&oracle    & 0.52 & 0.77 & 1.16 & 1.61 & 2.13 & 2.90 & 3.69 & 4.87 & 6.54 & 7.26 & 7.19 \\
		&penalized & 0.29 & 0.31 & 0.44 & 0.71 & 1.12 & 2.19 & 5.77 & 6.74 & 6.85 & 7.05 & 7.33 \\
		\hline
		$n=1000$&full rank & 3.27 & 3.21 & 3.21 & 3.23 & 3.20 & 3.15 & 3.27 & 3.26 & 3.25 & 3.24 & 3.28 \\
		&nuclear   & 1.03 & 1.00 & 1.04 & 1.09 & 1.11 & 1.17 & 1.32 & 1.48 & 1.83 & 2.29 & 2.83 \\
		&oracle    & 0.21 & 0.28 & 0.51 & 0.66 & 0.85 & 1.21 & 1.60 & 2.13 & 2.81 & 3.19 & 3.28 \\
		&penalized & 0.14 & 0.17 & 0.28 & 0.54 & 0.70 & 1.39 & 1.40 & 1.78 & 2.34 & 3.10 & 3.55 \\
		\hline
		$n=2000$&full rank & 1.53 & 1.52 & 1.52 & 1.57 & 1.50 & 1.57 & 1.49 & 1.52 & 1.52 & 1.57 & 1.53 \\
		&nuclear   & 0.73 & 0.73 & 0.73 & 0.80 & 0.76 & 0.84 & 0.83 & 0.92 & 1.07 & 1.29 & 1.42 \\
		&oracle    & 0.08 & 0.12 & 0.20 & 0.30 & 0.38 & 0.56 & 0.69 & 0.95 & 1.29 & 1.54 & 1.53 \\
		&penalized & 0.05 & 0.30 & 0.74 & 1.10 & 0.29 & 0.38 & 1.20 & 1.38 & 1.58 & 1.53 & 2.07 \\
		\hline
		$n=5000$&full rank & 0.60 & 0.57 & 0.60 & 0.61 & 0.61 & 0.62 & 0.59 & 0.61 & 0.58 & 0.57 &  0.62 \\
		&nuclear   & 0.39 & 0.36 & 0.39 & 0.40 & 0.41 & 0.43 & 0.42 & 0.46 & 0.47 & 0.51 & * \\
		&oracle    & 0.04 & 0.05 & 0.07 & 0.10 & 0.14 & 0.20 & 0.25 & 0.38 & 0.48 & 0.55 &  0.62 \\
		&penalized & 0.07 & 0.03 & 0.14 & 0.47 & 0.60 & 0.16 & 0.43 & 1.02 & 1.81 & 1.45 &  1.02 \\
		\hline
	\end{tabular}
	\caption{Mean excess risk  for $\lambda=1$ and $n\in \{200,500,1000,2000,5000\}$}
	\label{Table2}
\end{table}
\noindent

\begin{figure}[htp!] \centering  \includegraphics[scale=0.45]{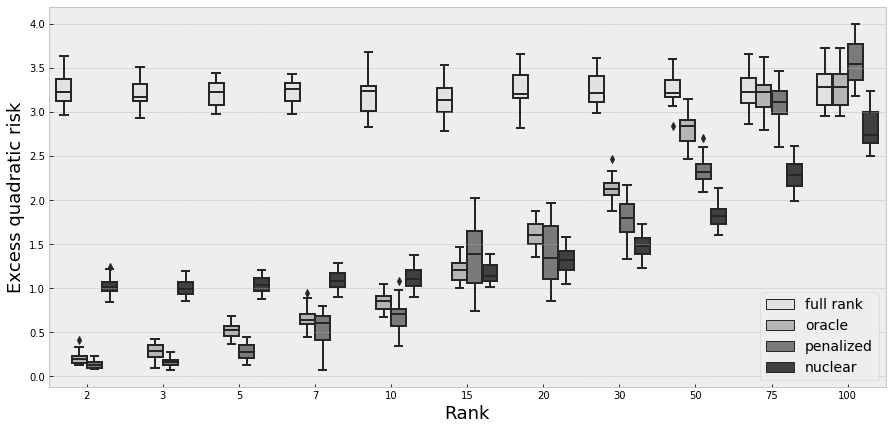}   \caption{Mean excess risk  for $n = 1000$ and $\lambda=1$.}  \label{Fig1}\end{figure}

\begin{figure}[htp!]
\centering
\includegraphics[scale=0.45]{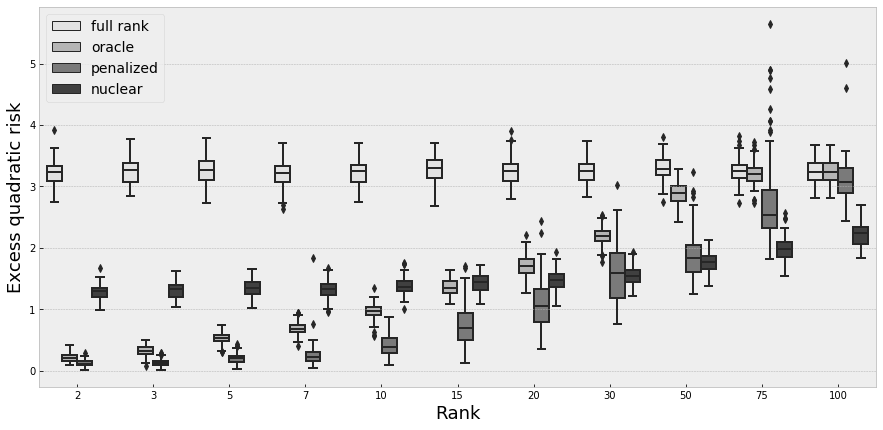}
	\caption{Mean excess risk  for $n = 1000$ and $\lambda=0.5$.}
	\label{Fig2}
\end{figure}

%\begin{figure}[htp!]   \centering    \includegraphics[scale=0.45]{VAR_N_500_M_100_lambda_1.png}    \caption{Mean excess risk  for $n = 500$ and $\lambda=1$.}   \label{Fig3}\end{figure}

\begin{figure}[htp!]   \centering    \includegraphics[scale=0.45]{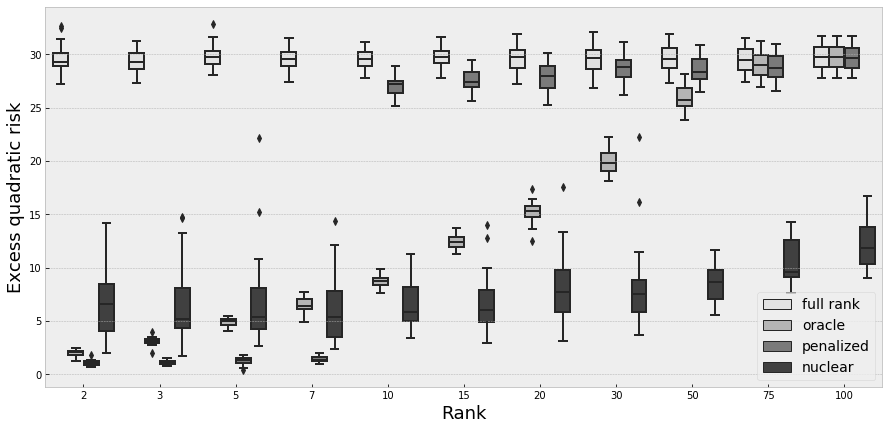}    \caption{Mean excess risk  for $n = 200$ and $\lambda=1$.}   \label{Fig4}\end{figure}

\paragraph{General Remarks}

Performing a full-rank estimation of a matrix of low rank is generally the worst solution. It is normal, because the overfitting is much more important in this case. It doesn't exploit the underlying small rank aspect, so the performance doesn't change with $r_0$.

For all other estimator, the performance degrades with $r_0$. In particular for both penalized and nuclear estimator, it is possible to have worst performance than the full-rank estimator when the underlying rank is closed to M, in particular when the dispersion of singular values is uniform ($\lambda=1$)  . It is also logical, as in this case we are far from the low rank setting, and the penalization becomes a drawback. When $r_0=100$, oracle and full rank estimators are the same.

We also remark that the excess risk decreases when  $n$ increases. When we are really close to the high-dimensional case ($ M \approx n$), a rank threshold appears in the performance of the penalized estimator. Below this rank, the penalized estimator have a good performance. Above this threshold, the penalized estimator has the same performance than the full rank estimator. It suggests that, in this case, the rank chosen by the penalized estimator is close to $M$. 

\paragraph{Comparison with oracle estimator}

When the rank is small, our penalized estimator outperforms the oracle estimator. It is especially the case when we have few observations. In fact, in this case, the penalized estimator choose generally a smaller rank than $r_0$, and it has then less parameters to fit, avoiding generally overfitting. 
However, this effects becomes a drawback when $r_0$ or $n$ increases: then, the complexity of the estimation grows, and therefore the estimation error. %We also should note that there is a lot of variation of performance due to the choice of the constant C, which is really sensitive. The slope heuristics helps, but doesn't always find the best penalization.
When the number of small singular values is important ($\lambda = 0.5$), the penalized estimator improves its performance, as the model behaves like a small-rank. 

\paragraph{Comparison with \textit{nuclear} estimator}

Generally the performances of the nuclear estimator are less-rank sensitive than the performance of oracle or penalized estimator. Our hypothesis is that nuclear estimator only performs near low-rank approximation, and that it is harder to be near a "good" very small rank matrix than a "good" larger one. 
This implies that our models outperforms nuclear estimator in small rank settings (or when $\lambda = 0.5$), but it is outperformed when the rank is higher.
The penalized estimator has, relatively, a better performance when $n$ is small. 

 %The relation to the number of observation is more %complex to explain. The penalized estimator a better

%\\ \textcolor{red}{\sout{In some cases, the penalized estimation algorithm gives aberrant values for the estimator. It seems to be due to inversion of matrices with small eigenvalues close to zero, which led to numerical errors.}}

\section{Application to a macro-economic data-set}\label{s5}

We want to evaluate the performances of our estimations on real-world datasets. We use macro-economical data known used in \cite{giannone2015prior}. It consists in different US economic indicator. We used the three different datasets:

\begin{itemize}
    \item A small dataset containing $M=3$ quarterly times series between 1959 and 2008.
    \item A medium dataset containing $M=7$ quarterly times series between 1959 and 2008.
    \item A large dataset containing $M=19$ quarterly times series between 1959 and 2006.
\end{itemize}

\noindent
The datasets are described with more details in \cite{giannone2015prior}. We weren't able to get  those whole datasets; hence our results are not easy to compare with those of \cite{giannone2015prior}.
\\
In order to test the models, we estimate them by using the data between 1959 and 2001, and  perform prediction on the period 2001-2006 for GDP (Gross domestic product) and GDP deflator (also called implicit GDP, this is a measure of the level of prices of all new, domestically produced, final goods and services in an economy in a year). We evaluate this prediction using the Mean Squared Error (MSE).

\begin{table}[htp!]
    \centering
    \begin{tabular}{|l|ccc|}
\hline
Method & Small & Medium & Large\\
\hline
Constant Trend& 4.72 & 4.72 & 4.72 \\
Independent AR(1) & 4.56 & 4.56 & 4.56 \\
VAR(1) & 3.58 & 2.70 & 2.90 \\
penalized VAR(1) & \bf{3.57} & 2.78 & 2.84 \\
$(D+A)$-model & 3.87 & \bf{2.58} & \bf{2.83} \\
\hline
\end{tabular}
    \caption{MSE of the GDP forecast.}
    \label{Table3}
\end{table}
\noindent

\begin{table}[htp!]
    \centering
    \begin{tabular}{|l|ccc|}
\hline
Method & Small & Medium & Large\\
\hline
Constant Trend& 1.23 & 1.23 & 1.23 \\
Independent AR(1) & 1.19 & 1.19 & 1.19 \\
VAR(1) & 1.18 & \bf{1.08} & 1.11\\
penalized VAR(1) & 1.18 & 1.08 & \bf{1.09} \\
$(D+A)$-model & \bf{1.17} & 1.10 & 1.15 \\
\hline
\end{tabular}
    \caption{MSE of the GDP deflator forecast.}
    \label{Table4}
\end{table}
\medskip

\noindent
We compare the performances of 5 predictors.
\begin{itemize}
    \item First, we use a naive predictor which asserts  a constant trend at each date.
    \item The second prediction is obtained assuming independent AR(1)-models for each series.
    \item In the third, we use the full rank estimator.
    \item The fourth predictor is the one based on our rank-penalized estimator.
    \item In the fifth predictor, we consider the $(D+A)$-model introduced in the Remark \ref{D+A}.
 \end{itemize}

We sum up the results in Tables \ref{Table3} and \ref{Table4}. The penalized VAR estimator has better results than the Naive or Independent AR estimator on all the datasets. The full-rank VAR algorithm outperforms it for the small or medium datasets. However, if we increase the number of features (or series), the performance of the penalized VAR is less affected than the performance of the full-rank VAR estimator. We suggest that it performs a form of features selection, which reduces over-fitting.

\noindent
The $(D+A)$-model outperforms all other models on GDP forecasts, but the VAR models are better on GDP deflators forecasts. It suggests that the low rank hypothesis may be adapted in other contexts than pure VAR(1) regression.

\section{Conclusion}

We considered a rank-constrained VAR model to predict high-dimensional time series. We proposed a theoretical study of our estimator and proved that it leads to consistent predictions. However, many questions remain opened. Faster rates should be possible for strongly convex losses, in the spirit of~\cite{ALW13}. It would also be necessary to study the performances of the slope heuristic from a theoretical point of view, as was done in the i.i.d setting by~\cite{arlot2009data,arlot2019minimal}.\\
The properties of the $(D+A)$-models introduced in Remark \ref{D+A} are checked in \S\, \ref{s5}; the use of such models deserves a special attention.
Finally other heteroskedastic models may be investigated. E.g.:\begin{itemize}
    \item
  (V)ARCH models may be rewritten from a more elementary rewriting of \cite{BLR06}
$$
X_t=\Sigma_t\xi_t, \qquad
\Sigma_t^T\Sigma_t=a^2\mbox{Id}_M+B( X_{t-1}),$$
for $B(x)=(B_{i,j}(x))_{1\le i,j\le M}$ some symmetric positive matrix valued quad\-ratic form. Recall that the non negative square root of a positive definite matrix exists and is indeed unique. A simple example of this situation is a diagonal quadratic form $$B(x)=\mbox{diag}(b_1(x),\ldots, b_M(x)),$$ with $b_i(x)=x^TQ^{(i)}x$ for $M$ non-negative $M\times M$-matrices $Q^{(1)},\ldots, Q^{(M)}$. \cite{Poignard2019} introduces analogous models with sparsity considerations instead of our suggested of a rank restriction.
The rank condition  turns as the fact that all the matrices
 $Q^{(1)},\ldots, Q^{(M)}$ admit a same decomposition \eqref{rankdec} wrt rank
 $$
 Q^{(i)}=UD^{(i)}V\;;
 $$
\item
(V)INARCH models
$$X_t=P_t(a+AX_{t-1}),$$ for $P_t(\lambda_1,\ldots,\lambda_M)
=(P_t^{(1)}(\lambda_1),\ldots,P_t^{(d)}(\lambda_M))$ defined by i.i.d. arrays of Poisson processes,   some  $a\in\mathbb{R}_+^M$, and some $M\times M$-matrix $A$ with non-negative coefficients.
The low rank model writes here with $A\in{\cal M}(\rho,r)$;
\item
(V)INAR models (defined through thinning operators)
$$
X_t=A\circ X_{t-1}+\xi_t,\quad
(A\circ x)_i=\sum_{j=1}^dA_{i,j}\circ x_j, $$ for some $M\times M$-matrix $A$ with non-negative coefficients, here  $\circ$ stands for the   thinning operator.
The low rank model writes again with $A\in{\cal M}(\rho,r)$.
\end{itemize}
More examples could be inspired from the models detailed in~\cite{D18}. One may think of multiple Volterra processes, bilinear models, switching VAR models, etc\ldots The heteroskedasticity of these models will require new techniques. Indeed, in the proof section below, it will be clear that our results rely on a Bernstein inequality proven in~\cite{DF15} that cannot be used for (V)ARCH models. New concentration tools as, for example, those developed in \cite{DDF} will be used in forthcoming works invoking adapted datasets. \medskip

\noindent The simulations and the real data study demonstrate that such low rank VAR models are meaningful, in particular when the underlying rank of the process is small. Anyway additional features such as heteroskedasticity need to be taken into consideration. Hence, as this was stressed in the introduction, we believe that there is a urgent need for more models involving low rank assumptions for high-dimensional time series.

\section{Proofs}\label{s7}

\subsection{Preliminaries}

Let us start with a few additional notations.
\begin{dfn}
 When Assumption~\ref{ass:bruit2}  is satisfied, we put
$$V_{(n)} =  V\left(  \frac{1}{1-\rho}\Big(K_{n-1}(\rho)\Big)^2 +  \sum_{k=2}^n \Big(K_{n-k}(\rho)\Big)^2  \right),$$
where  $K_n(\rho)=(1-\rho^{n+1})/(1-\rho)$ and $V=8\e c^2d M$ and where $c$ and $d$ are the constants defined in Assumption~\ref{ass:bruit2}.
\end{dfn}
Note that
\begin{equation*}
\frac{V(n)}{n-1}\le \mathcal{V}
\end{equation*}
where $\displaystyle \mathcal{V}=\mathcal{V}_0 M$ and $\delta=\delta_0\sqrt{M}$, and $\mathcal{V}_0=8\e c^2d(2-\rho)/(1-\rho)^3$ and $\delta_0=2 c/(1-\rho)$ are as in Section~\ref{s3} above.

In~\cite{ADF18} the authors propose to use an exponential inequality to control the deviations between the generalization error and the empirical risk in the case of a non-stationary Markov chain. We follow the same approach here. The version of Bernstein inequality from~\cite{DF15} gives directly the following result.
\begin{cor}
 \label{coro-var}
Under Assumptions~\ref{ass:rho} and~\ref{ass:bruit2} we have, for any matrix $Q\in\mathcal{M}(\rho,r)$,
%$V_1= {V}/({1-\rho})$  and $V= 2D C^2 e^{\mathbb{E}\| \xi\|/C}\le 2C^2D^2$, we obtain $$V_{(n)}\in[V/(1-\rho) + (n-1)V,V/(1-\rho)^2+(n-1)V/(n-\rho)],$$ and
\begin{equation*}
\mathbb{E}\exp\left( \pm s (R_n(Q)-R(Q)) \right)
\leq \exp\left( \frac{\displaystyle s^2(1+\rho)^2  \frac{V_{(n)}}{n-1}}{\displaystyle 2(n-1)-2s(1+\rho)\delta } \right),
\end{equation*}
$\displaystyle \forall s\in\left[0,\frac{n-1}{(1+\rho)\delta}\right)$, with $\displaystyle \delta=\frac{2c\sqrt{M}}{1-\rho}$.
\end{cor}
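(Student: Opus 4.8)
The plan is to recognize $R_n(Q)-R(Q)$ as a normalized sum over a stationary, geometrically weakly dependent sequence, and then to read off the variance and tail parameters required by the Bernstein inequality of~\cite{DF15}. First I would write
$$
R_n(Q)-R(Q)=\frac{1}{n-1}\sum_{i=2}^{n}Z_i,\qquad Z_i:=\ell\!\left(X_i-QX_{i-1}\right)-\E\!\left[\ell\!\left(X_i-QX_{i-1}\right)\right],
$$
so that $\pm s\big(R_n(Q)-R(Q)\big)=\frac{\pm s}{n-1}\sum_{i=2}^{n}Z_i$ and it suffices to control the Laplace transform of $\sum_{i=2}^{n}(\pm Z_i)$. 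Since $X_i=AX_{i-1}+\xi_i$, one has $Z_i=h_Q(X_{i-1},\xi_i)-\E[h_Q(X_{i-1},\xi_i)]$ with $h_Q(x,\xi):=\ell\big((A-Q)x+\xi\big)$; because $\ell$ is $1$-Lipschitz (Assumption~\ref{ass:lip}) and $\|A-Q\|_{S_\infty}\le\|A\|_{S_\infty}+\|Q\|_{S_\infty}\le 2\rho\le 1+\rho$, the map $h_Q$ is $(1+\rho)$-Lipschitz jointly in $(x,\xi)$.

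Next I would bring in the dependence and moment structure. As recalled in Section~\ref{s2}, the stationary chain $(X_t)$ is $\tau$-weakly dependent with geometric decay $\tau_k\le C\rho^{k}$, a consequence of $\|A\|_{S_\infty}\le\rho<1$; equivalently, via $X_i=A^{i-1}X_1+\sum_{k=0}^{i-2}A^{k}\xi_{i-k}$ with $X_1\sim\pi$, the coefficient of each input decays like $\rho^{\cdot}$ and the finite-horizon variances accumulate as $\big(K_m(\rho)\big)^2$ with $K_m(\rho)=\sum_{j=0}^{m}\rho^{j}$. Meanwhile Assumption~\ref{ass:bruit2}, i.e.\ $\E\exp(|\xi^{(j)}|/c)\le d$, yields sub-exponential moment bounds (for instance $\E|\xi^{(j)}|^{p}\le p!\,c^{p}d$ for $p\ge2$, from $x^p/p!\le\mathrm{e}^x$), hence an order-two term of size of order $c^{2}dM$ for $\|\xi\|^{2}$ and a Bernstein scale of order $c\sqrt M/(1-\rho)$ for $\|\xi\|$ and $\|X_t\|$. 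These are exactly the ingredients producing the constant $V=8\mathrm{e}\,c^{2}dM$ and the scale $\delta=2c\sqrt M/(1-\rho)$, and assembling the $K$-sums over $i=2,\dots,n$ reproduces the quantity $V_{(n)}=V\big(\tfrac{1}{1-\rho}(K_{n-1}(\rho))^{2}+\sum_{k=2}^{n}(K_{n-k}(\rho))^{2}\big)$.

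Finally I would invoke the Bernstein inequality of~\cite{DF15} in its form for such causal/$\tau$-weakly dependent, sub-exponential sequences: the log-Laplace transform of the centered sum is bounded by $s^{2}\cdot(\text{variance proxy})/\big(2-2s\cdot(\text{scale})\big)$ on the admissible range of $s$. Since $h_Q$ is $(1+\rho)$-Lipschitz, the variance proxy of $\sum_{i=2}^{n}(\pm Z_i)$ is $(1+\rho)^{2}V_{(n)}$ and the scale is $(1+\rho)\delta$; substituting $s\mapsto s/(n-1)$ for the normalization then gives exactly
$$
\E\exp\!\left(\pm s\big(R_n(Q)-R(Q)\big)\right)\le\exp\!\left(\frac{s^{2}(1+\rho)^{2}\frac{V_{(n)}}{n-1}}{2(n-1)-2s(1+\rho)\delta}\right),\qquad 0\le s<\frac{n-1}{(1+\rho)\delta},
$$
the two signs being handled identically since $-Z_i$ has the same Lipschitz and moment structure as $Z_i$. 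The main obstacle is this middle accounting step: verifying that the coupling and variance quantities of $(X_t)$ aggregate exactly into $V_{(n)}$, and checking that the hypotheses of the~\cite{DF15} inequality (the precise moment condition and the stated range of $s$) hold with the claimed constants $V$ and $\delta$. The probabilistic heart of the argument is entirely contained in that cited inequality, so once the bookkeeping matches, the corollary follows ``directly'' as stated.
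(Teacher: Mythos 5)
Your overall strategy is the paper's: reduce the claim to the Bernstein inequality of \cite{DF15}, identify a per-term Lipschitz constant $(1+\rho)$, and verify the moment hypotheses of that inequality from Assumption~\ref{ass:bruit2} with the constants $C=2c\sqrt M$, $V=8\e c^2dM$ and $\delta=C/(1-\rho)$. But the step by which you produce the factor $(1+\rho)$ is not the right mechanism, and the intermediate claim is false as stated. The inequality of \cite{DF15} applies to functionals $f(x_1,\dots,x_n)$ that are \emph{separately} Lipschitz in the coordinates of the chain; here $f(x_1,\dots,x_n)=\sum_{i=2}^n\ell(x_i-Qx_{i-1})$, and each $x_j$ enters two consecutive summands, once as the response (coefficient $1$) and once as the regressor (coefficient $\|Q\|_{S_\infty}\le\rho$), whence the per-coordinate constant $1+\rho$; the matrix $A$ plays no role in this constant. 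Your route via $h_Q(x,\xi)=\ell((A-Q)x+\xi)$ yields Lipschitz constants $\|A-Q\|_{S_\infty}\le 2\rho$ in $x$ and $1$ in $\xi$, and $h_Q$ is not $(1+\rho)$-Lipschitz jointly in $(x,\xi)$: under the sum product metric the joint constant is $1+2\rho>1+\rho$, and under the Euclidean product metric it is $\sqrt{1+4\rho^2}$, which exceeds $1+\rho$ for $\rho>2/3$. Moreover the pairs $(X_{i-1},\xi_i)$ are not the objects for which the cited inequality is stated, so this parametrization does not plug into \cite{DF15} directly. That you land on $1+\rho$ rests on the coincidence $2\rho\le1+\rho$.

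The second issue is that the ``middle accounting step'' you defer is in fact the bulk of the paper's proof: one must check that $G_\xi(y)=\mathbb{E}\|\xi-y\|$ and $G_{X_1}(x)=\mathbb{E}\|(I-A)^{-1}\xi-x\|$ satisfy the Bernstein moment condition $\mathbb{E}[(G_\xi(\xi))^k]\le \frac{k!}{2}V_2C^{k-2}$ for all $k\ge2$. This requires proving $\mathbb{E}\|\xi\|^{k}\le d(c\sqrt M)^{k}k!$ --- by H\"older for even $k$ and by a Lyapunov/Stirling argument (which costs the factor $\e$) for odd $k$ --- and then using $\mathbb{E}[G_\xi(\xi)^k]\le 2^k\mathbb{E}\|\xi\|^k$ and $G_{X_1}((I-A)^{-1}y)\le(1-\rho)^{-1}G_\xi(y)$ to obtain $C=2c\sqrt M$, $V_2=8\e c^2Md$ and $V_1=V_2/(1-\rho)$; these are exactly the quantities that define $V_{(n)}$ and force $\delta=CK_\infty(\rho)=2c\sqrt M/(1-\rho)$. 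Your sketch names the right constants but does not establish them, so as written it is an outline of the paper's argument with its two load-bearing computations (the Lipschitz bookkeeping and the moment verification) either missing or incorrectly justified.
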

%\noindent\emph{Proof.}\textcolor{red}{Follow our current paper  there is a rather similar proof.}\hfill \qed

\begin{proof}[Proof of Corollary~\ref{coro-var}]
This corollary is a consequence of the Bernstein inequality in~\cite{DF15} which requires the following assumption.
\begin{ass}\label{ass:bruit}
There exist some constants $V_1$, $V_2$ and ${C}$ such that
\begin{equation*}
{\mathbb E} \Big[  \Big(  G_{X_1}(X_1)\Big)^k\Big] \leq \frac {k!}{2} V_1 C^{k-2}
\ \text{and} \quad
 {\mathbb E} \Big[  \Big(  G_\xi(\xi)\Big)^k\Big] \leq
 \frac {k!}{2} V_2 C^{k-2} \, .
 \end{equation*}
  \noindent
 where
  $G_{X_1}$ and  $G_\xi$  are  defined by
$$
G_{X_1}(x)= \mathbb{E} \|(I-A)^{-1}\xi-x\|  ,   \qquad
G_{\xi}(y)= \mathbb{E} \|\xi-y \|.
$$
\end{ass}
  \noindent
Note that for $x=(I-A)^{-1}y$.
\begin{equation}
G_{X_1}(x)\le \|(I-A)^{-1}\xi\|_{S_\infty}G_{\xi}(y)\le \frac{1}{1-\rho}G_{\xi}(y).
\end{equation}

 \noindent
Assumption~\ref{ass:bruit2} implies that $\E |\xi^{(1)}|^k\le dc^{k}k!$, for each $k\in\N$.
%Now quote that $$\E \exp\left(\frac{\|\xi\|}C\right)\le 2\E \cosh\left(\frac{\|\xi\|}C\right)=2\sum_{j=0}^\infty \frac{\E\|\xi\|^{2j}}{(2j)!C^{2j}} $$
With the Euclidean norm $\|\xi\|^2=\sum_{i=1}^M(\xi^{(i)})^2$ thus from  H\"older inequality we obtain with $k=2j$
$$\E \|\xi\|^{k}=\sum_{1\le i_1,\ldots,i_j\le M}\E\left( (\xi^{(i_1)})^2\cdots (\xi^{(i_1)})^2\right)
\le\sum_{1\le i_1,\ldots,i_j\le M}\E (\xi^{(1)})^k = \left({\sqrt M}\right)^k\E(\xi^{(1)})^k
\le d\left({\sqrt M}c\right)^k k!
$$
Consider now some odd number such that $k-1=2j$ then
$$
\E \|\xi\|^{k-1}= \|\xi\|^{k-1}_{k-1}\le  \|\xi\|^{k}_{k-1} \le d(c\sqrt M)^{k-1}(k!)^{1-\frac1k}
\le \e d(c\sqrt M)^{k-1}(k-1)!
$$
since from Stirling inequality $k!\ge (k/\e)^k$ and thus $k!^{1-\frac1k}\le \e(k-1)!$; we also use the relation $d\ge1$.
\\
  Note that the independence of coordinates is not required.
\\
Since $\E G^k_\xi(\xi)\le 2^k\E\|\xi\|^k$,
Assumption \ref{ass:bruit} is satisfied with $$C=2c\sqrt M,\quad\  V_2=8\e c^2Md, \quad\mbox{ and}\quad\ V_1=V_2/(1-\rho).$$ Moreover $\delta$ has be chosen as an upper-bound of $CK_n(\rho)$. Then we choose
$\displaystyle\delta=\frac{C}{1-\rho}=\frac{2c\sqrt{M}}{1-\rho}.$
\end{proof}

\subsection{Proof of the theorems of Section~\ref{s3}}

\begin{proof}[Proof of Theorem~\ref{thm-1}] A first issue is to derive some general useful features from empirical processes techniques. Recall that the Frobenius norm of a matrix is $\|Q\|_F=\|Q\|_{S_2}=\sqrt{\mbox{tr }QQ^T}$ satisfies
$$
\|Q\|_{S_\infty}\le \|Q\|_F\le \mbox{rank}(Q)\cdot\|Q\|_{S_\infty}.
$$
Define for of a set of matrices $\cal U$,  its covering number ${\cal N}(\epsilon,{\cal U})$   with respect to the Frobenius norm.\\
In their Lemma 3.1, \cite{CP11} prove a main combinatorial property about the sphere ${\cal S}_r$, which is the set of $M\times M-$matrices $S$ with rank $r$ and with Frobenius norm $\|Q\|_F=1$:
{\it
for $\epsilon > 0$,  ${\cal S}_r$ may be covered by an $\epsilon-$net (in $\|\cdot\|_F-$metric) $\overline{\cal S}_r^\epsilon$ with cardinal $${\cal N}(\epsilon,{\cal S}_r)\le \left(\frac9\epsilon\right)^{(2M+1)r}.$$}
%This implies that $$\sup_{a\in {\cal S}_r}d(a,\overline{\cal S}_r)\le\epsilon.$$
\\
Now denote ${\cal S}_{u,r}$ and ${\cal B}_{u,r}$ respectively the sphere and the ball of such matrices $Q$ with rank $r$ and with either $\|Q\|_F=u$ or $\|Q\|_F\le u$.
\\
An homogeneity argument first allows to derive the existence of an $\epsilon-$net $\overline{\cal S}_{u,r}^\epsilon$ of ${\cal S}_{u,r}$ with cardinal $\le (9u/\epsilon)^{(2M+1)r}$.
\\
Now for $\mu>0$, $\displaystyle{\cal B}_{\mu,r}=\bigcup_{0\le v\le \mu}{\cal S}_{v,r} $ then an $\epsilon-$net $\overline{\cal B}_{\mu,r}^\epsilon$ of ${\cal B}_{\mu,r}$ is provided by $\bigcup_{j=0}^k \overline{\cal S}_{j\epsilon,r}^\epsilon$ with $k=[\mu/\epsilon]+1$. Then
\begin{eqnarray*}{\cal N}(\epsilon,{\cal B}_{\mu,r})&\le& \sum_{j=0}^k {\cal N}(\epsilon,{\cal S}_{j\epsilon,r})\\& \le& (k+1)\left(\frac{9\mu}\epsilon\right)^{(2M+1)r}\\ &\le& 3\cdot\frac \mu\epsilon\left(\frac{9\mu}\epsilon\right)^{(2M+1)r}\!\!\!\!\!\!\!\!\!\!\!\!\!\!\!\!,\qquad\qquad\qquad\qquad \text{ assuming }\quad \frac{\mu}{\epsilon} \geq 1 \\ %&\le& 2\cdot 9^{(2M+1)r}\eta^{(2M+1)r+1}\left(\frac1\epsilon\right)^{(2M+1)r+1}\\
&\le &\left(\frac{9\mu}\epsilon\right)^{(2M+1)r+1}.
\end{eqnarray*}
Still using $\mu/\epsilon\ge 1$ we can simplify further inequalities: $M\ge 1$ so $(2M+1)r+1 \le 3Mr+1$. As in addition $r\leq 1$, one has $3Mr+1\leq 4Mr$, which leads to
\begin{equation*}
{\cal N}(\epsilon, {\cal B}_{\mu,r})\le \left(\frac {9\mu}\epsilon\right)^{4Mr}.
\end{equation*}
The above inequalities relating the various norms imply that ${\cal M}(\rho, r) \subset {\cal B}_{\rho r,r} \subset {\cal B}_{r,r}$ and so
\begin{equation}\label{packing}
{\cal N}(\epsilon, {\cal M}(\rho, r)) \le{\cal N}(\epsilon, {\cal B}_{r,r}) \le \left(\frac {9r}\epsilon\right)^{4Mr}
\end{equation}
as soon as $r/\epsilon \geq 1$.

\medskip

\noindent
Note that the above covering numbers are considered wrt the Frobenius norm, thus the above inequality also concerns the covering numbers wrt $\|\cdot\|_{S_\infty}$.
% properties of the set of matrices
% $\mathcal{M}_\epsilon(r,\rho)$ of $M\times r$ matrices $(W,V)$ that satisfy
% $\forall (i,j)\in\{1,\ldots,M\}\times \{1,\ldots,r\}$, $\exists k\in\mathbb{Z}\cap [-\frac{\rho}{\epsilon},\frac{\rho}{\epsilon}]$ and $\exists k\in\mathbb{Z}\cap [-\frac{1}{\epsilon},\frac{1}{\epsilon}]$ such that $W_{i,j}=k\epsilon$ and $V_{i,j}=k'\epsilon$.
%use the discretized version of $\mathcal{M}(r,\rho)$ given by $\mathcal{M}_\eta(r,\rho)$ that contains all the matrices $W$ and $V$ that are $M\times r$ with $W$ and $V$ having entries in $\eta \mathbb{N} \cap [-\rho,\rho]$ and $\eta \mathbb{N} \cap [-1,1]$ respectively. Use a union bound...
%We have, for any $$, the deviation inequality from Corollary~\ref{coro-var}. A union found on $f_{1:T}\in\mathcal{F}_{\epsilon}$ leads to, for any $s\in\left[0,\frac{n-1}{L(1+\rho)\delta}\right)$,
%Let $(W,V)\in \mathcal{M}_\epsilon(r,\rho)$.
\medskip
\begin{rmk} For the model $(D+A)$ of Remark \ref{D+A}
quote that  the bound in \eqref{packing}   admits a right hand side inequality transformed as
$$
\le \left(\frac {18r}\epsilon\right)^{4Mr}\left(\frac {2}\epsilon\right)^{M}\le \left(\frac {18r}\epsilon\right)^{4Mr+M}
$$
which does not change the structure of the excess risk behaviour.

\end{rmk}
\noindent
Fix $x_r>0$ and $s_r\in\left[0,\frac{n-1}{(1+\rho)\delta}\right)$ (their value will be given later). By using both Corollary  \ref{coro-var} and the inequality \eqref{packing}, we have
\begin{eqnarray*}
\mbox{Prob}_\epsilon &=& \mathbb{P}\left(\max_{Q\in \overline{\cal B}_{r\rho,r}^\epsilon}|R_n(Q)-R(Q)| > x_r \right)\\
& \leq& \sum_{Q\in \overline{\cal B}_{r\rho,r}^\epsilon} \mathbb{P}\left( |R_n(Q)-R(Q)| > x_r \right)
\\
& \leq& \sum_{Q\in \overline{\cal B}_{\eta,r}^\epsilon} \mathbb{E}\exp\left( s_r |R_n(Q)-R(Q)| - s_r x_r\right)
\\
& \leq& 2 {\cal N}\left(\epsilon, {\cal M}(\rho, r)\right)\exp\left( \frac{s_r^2(1+\rho)^2 \mathcal{V}}{2(n-1)-2s_r(1+\rho)\delta } -s_r x_r \right)
\\
&\le& 2 \left(\frac {9r}\epsilon\right)^{4Mr}\exp\left( \frac{s_r^2(1+\rho)^2  \mathcal{V}}{2(n-1)-2s_r(1+\rho)\delta } -s_r x_r \right).
\end{eqnarray*}
Now, for any $Q\in{\cal M}(\rho, r)$ define $Q^{\epsilon}$ by
$$ Q^\epsilon = \underset{B\in\overline{\cal B}_{r\rho,r}^\epsilon}{\operatorname{argmin}}  \|Q - B \| . $$
Obviously
$$
\left| \ell(X_t-Q^\epsilon  X_{t-1}) -  \ell(X_t -Q X_{t-1} )\right|
\leq  \|(Q^\epsilon-Q)X_{t-1})\|
\leq  \|Q^\epsilon-Q\|_{S_\infty}\|X_{t-1}\|\leq \epsilon\|X_{t-1}\|,
$$
and as a consequence,
$$|R_n(Q^\epsilon)-R_n(Q)| \leq   \epsilon\cdot\frac{\sum_{t=1}^{n-1}\|X_t\|}{n-1},$$
and
$$|R(Q^\epsilon)-R(Q)| \leq   \epsilon\cdot\mathbb{E}\|X_1\|\le \frac{\epsilon}{1-\rho}\cdot\mathbb{E}\|\xi_1\|.$$
Applying Bernstein Inequality (3.3) in Proposition 3.1 of \cite{DF15} with $f(X_1,\dots,X_{n-1})=\sum_{j=1}^{n-1} \|X_j\|$ and $t\delta=\frac12$ we have, for any $y>0$,
\begin{align*}
& \mathbb{P}\left(\sum_{t=1}^{n-1}\|X_t\| > (n-1)\mathbb{E}\|X_1\| + y \right) \\
& \leq \mathbb{E}\exp\left[\frac{1}{2\delta}\left(\sum_{t=1}^{n-1}\|X_t\| - (n-1)\mathbb{E}\|X_1\| - y\right) \right]
\\
& \leq \exp\left(\frac{\left(\frac{1}{2\delta}\right)^2 V_{(n)}}{2\left(1-\frac{1}{2}\right)} - \frac{ y}{2\delta} \right) = \exp\left(\frac{ V_{(n)} }{4\delta^2}-\frac{ y }{2\delta} \right).
\end{align*}

\noindent
Now let us consider the ``favorable'' event
\begin{equation*}
\mathcal{E}_r= \left\{ \sum_{t=1}^{n-1}\|X_t\| \leq (n-1)\mathbb{E}\|X_1\| + y \right\}
  \bigcap \left\{\max_{Q\in \overline{\cal B}_{r\rho,r}^\epsilon}|R_n(Q)-R(Q)| \leq x_r  \right\}.
\end{equation*}
The previous inequalities show that
\begin{equation}
\label{proba-e}
\mathbb{P}\left( \mathcal{E}_r^c\right)\leq  \exp\left(\frac{ V_{(n)}}{4\delta^2}-\frac{ y }{2\delta} \right)
+ 2 \left(\frac {9r}\epsilon\right)^{4Mr}\exp\left( \frac{s_r^2(1+\rho)^2  \mathcal{V}}{2(n-1)-2s_r(1+\rho)\delta } -s_r x_r \right).
\end{equation}
\noindent
On $\mathcal{E}_r$, we have:
\begin{align*}
R(\hat{A}_{r})
& \leq R(\hat{A}_{r}^{\epsilon}) + \frac{\epsilon}{1-\rho}\cdot\mathbb{E}\|\xi_1\|
\\
& \leq R_n(\hat{A}_{r}^{\epsilon}) + x_r  +\frac{\epsilon}{1-\rho}\cdot\mathbb{E}\|\xi_1\|
\\
& \leq R_n(\hat{A}_{r}) + x_r + \epsilon  \left[ 2\cdot \frac{\mathbb{E}\|\xi_1\|}{1-\rho} + \frac{y}{n-1}\right]
\\
& \le \min_{Q  \in \overline{\cal B}_{r\rho,r}^\epsilon } R_n(Q) + x_r + \epsilon  \left[ 2 \cdot\frac{\mathbb{E}\|\xi_1\|}{1-\rho} + \frac{y}{n-1}\right]
\\
& \leq \min_{Q \in\overline{\cal B}_{r\rho,r}^\epsilon } R(Q) + 2x_r + \epsilon  \left[ 2\cdot \frac{\mathbb{E}\|\xi_1\|}{1-\rho} + \frac{y}{n-1}\right]
\\
& \leq \min_{Q \in {\cal M}(\rho, r) } R(Q) + 2x_r + \epsilon  \left[ 3\cdot \frac{\mathbb{E}\|\xi_1\|}{1-\rho} + \frac{y}{n-1}\right] .
\end{align*}
In particular, the choice $\epsilon = 1/n$ ensures:
\begin{equation}
\label{almost-done}
R(\hat{A}_{r}) \leq \min_{Q \in {\cal M}(\rho, r)} R(Q) + 2x_r + \frac{1}{n} \left[ 3\cdot \frac{\mathbb{E}\|\xi_1\|}{1-\rho} + \frac{y}{n-1}\right].
\end{equation}
Note that this choice is allowed: indeed, the only condition on $\epsilon$ is $r/\epsilon \geq 1$ and $r/\epsilon = rn \geq 1$.
Remind that $$\log \mathcal{N}(\epsilon,{\cal M}(\rho, r))\le 4Mr \log (9rn). $$ \\ Fix $\eta>0$ and put:
$$ x_r = \frac{s_r(1+\rho)^2  \mathcal{V}}{2(n-1)-2s_r(1+\rho)\delta } + \frac{ 4Mr \log (9rn) + \log\left(\frac{4}{\eta}\right)}{s_r} $$
and
$$ y =  2\delta\log\left(\frac{2}{\eta}\right) + \frac{ V_{(n)}}{2\delta }.$$
Note that, plugged into~\eqref{proba-e}, these choices ensure $\mathbb{P}(\mathcal{E}^c)\leq \eta/2+\eta/2=\eta$.\\
Put
$$s_r= \frac{1}{(1+\rho)}\sqrt{\frac{4(n-1) Mr \log (9rn)}{\mathcal{V}} }.$$
As soon as $ 2s_r(1+\rho)\delta  \leq n-1$, that is actually ensured by the condition $ n \geq 1 +  16\delta^2  Mr \log (9rn) / \mathcal{V} $,
we have:
\begin{align*}
x_r & \leq
\frac{s_r(1+\rho)^2 \mathcal{V}}{n-1} +
\frac{ 4Mr \log (9rn) + \log\left(\frac{4}{\eta}\right)}{s_r}
\\
& = 2(1+\rho)
\sqrt{
\frac{4\mathcal{V}
 Mr \log (9rn)}{n-1}}
 \\
& \quad + (1+\rho)
\log
\left(\frac{4}{\eta}\right) \sqrt{ \frac{\mathcal{V}}{(n-1) 4Mr \log (9rn)}}.
\end{align*}
Plugging the expressions of $x_r$ and $y$  into~\eqref{almost-done} gives:
\begin{multline}
R(\hat{A}_r) \leq
\min_{Q \in{\cal M}(\rho, r) } R(Q)
+ 4(1+\rho)
\sqrt{
\frac{4\mathcal{V}  Mr \log (9rn) }{n-1}}
\label{ineg}
\\
\nonumber
+ 2(1+\rho) \log\left(\frac{4}{\eta}\right) \sqrt{\frac{\mathcal{V}}{4(n-1)
 Mr \log (9rn)}}
+\frac{1}{n}\cdot \frac{3\mathbb{E}\| \xi_1 \|}{1-\rho} +\frac{2\delta}{n(n-1)}\log\left( \frac{2}{\eta} \right) +\frac{\mathcal{V}}{2 n \delta }
\nonumber
% + \frac{1}{n} \left[ 3 \frac{G_\epsilon(0)
% + \frac{G_{X_1}(0)}{n-1}}{1-\rho} + \frac{ 2\delta\log\left(\frac{2}{\eta}\right)  + \frac{V_{(n)} }{2\delta }}{n-1}\right]
\end{multline}
which ends the proof taking into account that $\mathcal{V}=\mathcal{V}_0 M$ and $\delta=\delta_0\sqrt{M}$.
\end{proof}

\begin{proof}[Proof of Theorem~\ref{thm-2}]
For short, put
$${\rm pen}(r) = 2(1+\rho)
\sqrt{
\frac{4\mathcal{V}_0  M^2r \log (9rn) }{n-1}} $$
so that
$$\hat{r} = \underset{r=1,\dots, \bar{r}(M,n)}{\operatorname{argmin}} \Biggl[
 R_n(\hat{A}_r) + {\rm pen}(r) \Biggr].$$
For short, put $\bar{r}=\bar{r}(M,n)$. We keep the notations of the previous proof, we will just change the values of $x_r$. From the classical union bound argument we derive:
$$
\mathbb{P}\left( \bigcup_{r=1}^{r_0} \mathcal{E}_r^c\right)\leq  \exp\left(\frac{ V_{(n)}}{4\delta^2}-\frac{ y }{2\delta} \right)
+ 2 \sum_{r=1}^{r_0} \left(\frac {9r}\epsilon\right)^{4Mr}\exp\left( \frac{s_r^2(1+\rho)^2  \mathcal{V}}{2(n-1)-2s(1+\rho)\delta } -s_r x_r \right).
$$
We take $y$, $s_r$ as in the previous proof and
$$ x_r = \frac{s_r(1+\rho)^2  \mathcal{V}}{2(n-1)-2s_r(1+\rho)\delta } + \frac{ 4Mr \log (9rn) + \log\left(\frac{4 M}{\eta}\right)}{s_r} $$
which implies
$$
x_r \leq 2(1+\rho)
\sqrt{
\frac{4\mathcal{V}
 Mr \log (9rn)}{n-1}} + (1+\rho)
\log
\left(\frac{4M}{\eta}\right) \sqrt{ \frac{4\mathcal{V}}{(n-1) Mr \log (9rn)}}.
$$
This leads to
$$ \mathbb{P}\left( \bigcup_{r=1}^{r_0} \mathcal{E}_r^c\right) \leq \frac{\eta}{2} + 2\sum_{r=1}^{r_0} \frac{\eta}{4M} =  \frac{\eta}{2} + \frac{r_0 \eta}{2M} \leq \eta.$$
Also note that for any $r\in\{1,\dots,M\}$,
\begin{align*}
x_r-{\rm pen}(r)
& =  (1+\rho)
\log
\left(\frac{4M}{\eta}\right) \sqrt{ \frac{\mathcal{V}}{4(n-1) Mr \log (9rn)}}
\\
& \leq (1+\rho)
\log
\left(\frac{4M}{\eta}\right) \sqrt{ \frac{\mathcal{V}}{4(n-1) M \log (9n)}}.
\end{align*}
Now, on the favorable event $\bigcap_{r=1}^{\bar{r}}\mathcal{E}_r$ we have
\begin{align*}
R(\hat{A})
& = R(\hat{A}_{\hat{r}})
\\
& \leq R(\hat{A}_{\hat{r}}^{\epsilon}) + \frac{\epsilon}{1-\rho}\cdot\mathbb{E}\|\xi_1\| \\
& \leq R_n(\hat{A}_{\hat{r}}^{\epsilon}) + x_{\hat{r}}  +\frac{\epsilon}{1-\rho}\cdot\mathbb{E}\|\xi_1\|
\\
& \leq R_n(\hat{A}_{r}) + x_{\hat{r}} + \epsilon  \left[ 2\cdot \frac{\mathbb{E}\|\xi_1\|}{1-\rho} + \frac{y}{n-1}\right]
\\
& \leq R_n(\hat{A}_{r}) +{\rm pen}(\hat{r})+ x_{\hat{r}}-{\rm pen}(\hat{r}) + \epsilon  \left[ 2\cdot \frac{\mathbb{E}\|\xi_1\|}{1-\rho} + \frac{y}{n-1}\right]
\\
& = \min_{1\leq r\leq \bar{r}} \left[ R_n(\hat{A}_{r}) + x_{r}\right] + [x_{\hat{r}}-{\rm pen}(\hat{r})] + \epsilon  \left[ 2\cdot \frac{\mathbb{E}\|\xi_1\|}{1-\rho} + \frac{y}{n-1}\right]
\\
& \le \min_{1\leq r\leq \bar{r}} \left[ \min_{Q  \in \overline{\cal B}_{r\rho,r}^\epsilon } R_n(Q) + x_r \right] + [x_{\hat{r}}-{\rm pen}(\hat{r})]  + \epsilon  \left[ 2 \cdot\frac{\mathbb{E}\|\xi_1\|}{1-\rho} + \frac{y}{n-1}\right]
\\
& \le \min_{1\leq r\leq \bar{r}} \left[ \min_{Q  \in \overline{\cal B}_{r\rho,r}^\epsilon } R(Q) + 2 x_r \right] + [x_{\hat{r}}-{\rm pen}(\hat{r})]  + \epsilon  \left[ 2 \cdot\frac{\mathbb{E}\|\xi_1\|}{1-\rho} + \frac{y}{n-1}\right]
\\
& \leq \min_{1\leq r\leq \bar{r}} \left[ \min_{Q \in {\cal M}(\rho, r) } R(Q) + 2x_r \right]
\\
& \quad \quad \quad +(1+\rho)
\log
\left(\frac{4M}{\eta}\right) \sqrt{ \frac{\mathcal{V}}{4(n-1) M \log (9n)}}  + \epsilon  \left[ 3\cdot \frac{\mathbb{E}\|\xi_1\|}{1-\rho} + \frac{y}{n-1}\right] .
\end{align*}
Replace $y$ and $x_r$ by their value to obtain
\begin{multline*}
 R(\hat{a})
 \leq  \min_{1\leq r\leq \bar{r}} \left[ \min_{Q \in {\cal M}(\rho, r) } R(Q) + 4(1+\rho)
\sqrt{
\frac{4\mathcal{V}
 Mr \log (9rn)}{n-1}}  \right]
 \\
 + 3(1+\rho)
\log
\left(\frac{4M}{\eta}\right) \sqrt{ \frac{\mathcal{V}}{4(n-1) M \log (9n)}}
+\frac{1}{n}\cdot \frac{3\mathbb{E}\| \xi_1 \|}{1-\rho} +\frac{2\delta}{n(n-1)}\log\left( \frac{2}{\eta} \right) +\frac{\mathcal{V}}{2 n \delta }.
\end{multline*}
This ends the proof.
\end{proof}

\bibliographystyle{apalike}
\setlength{\itemindent}{-\leftmargin}
\makeatletter\renewcommand{\@biblabel}[1]{}\makeatother

\end{document}